\newcommand{\eqdef}{\overset{\text{def}}{=}}
\newcommand{\Prob}{\mathbf{P}}
\newcommand{\Exp}[1]{\mathbf{E}\left[#1\right]}
\newcommand{\E}{\ensuremath{\mathbf{E}}}
\newcommand{\rset}{\mathbb{R}}
\newcommand{\nset}{\mathbb{N}}
\providecommand{\norm}[1]{\lVert#1\rVert}
\newcommand{\prox}{\ensuremath{\operatorname{prox}}}
\newtheorem{assumption}[theorem]{Assumption}
\newtheorem{algorithm}[theorem]{Algorithm}
\begin{document}

\title{  General convergence analysis of stochastic first order methods for composite  optimization }

%\subtitle{Stochastic first order methods}

\author{Ion Necoara}

\institute{Ion Necoara \at
Automatic Control and Systems Engineering Department, University  Politehnica Bucharest,\\        060042 Bucharest, Romania, ion.necoara@acse.pub.ro           
}

\date{}

\maketitle

\begin{abstract}
 In this paper we consider stochastic composite convex optimization problems with the objective function satisfying a stochastic bounded gradient condition, with or without a quadratic functional growth property.  These models include the most well-known  classes of objective functions analyzed in the literature:  non-smooth Lipschitz  functions and composition of a (potentially) non-smooth function  and a smooth function, with or without strong convexity. Based on the flexibility offered by our optimization model we consider several variants of stochastic first order methods, such as   the stochastic proximal gradient and the stochastic proximal point algorithms. Usually, the  convergence theory  for these methods has been  derived for simple stochastic optimization models satisfying restrictive assumptions, the rates are in general  sublinear and hold only for specific  decreasing  stepsizes.  Hence, we analyze the convergence rates of stochastic first order  methods with constant or variable stepsize under general assumptions covering a large class of objective functions. For constant stepsize we show that these methods can achieve linear convergence rate  up to a constant proportional to the stepsize and under some strong stochastic bounded gradient condition even pure linear convergence.  Moreover,    when a variable stepsize is chosen we derive sublinear convergence rates for these stochastic first order methods. Finally, the stochastic gradient mapping and the Moreau smoothing mapping introduced in the present paper lead to simple and intuitive proofs.
\end{abstract}

\keywords{Stochastic composite convex  optimization \and  stochastic bounded gradient \and  quadratic functional growth \and stochastic first order algorithms \and convergence rates.}
\subclass{65K05 \and    90C15  \and  52A20.}

%%%%%%%%%%%%%%%%%%%%%%%%%%%%%%%%%%%%%%%%%%%%%%%%%%

\section{Introduction}
The randomness in  most of the practical optimization
applications  led the stochastic optimization field to become an
essential tool for many applied mathematics areas, such as machine
learning and statistics \cite{MouBac:11}, distributed control and
signal processing \cite{NecNed:11}, sensor networks
\cite{BlaHer:06} and others. In particular, in statistics and machine learning
applications the optimization algorithms involve numerical
computation of parameters for a system designed to make decisions
based on yet unseen data. The recent success of certain optimization
methods for statistics and machine learning problems has motivated increasingly
great efforts into developments of new numerical algorithms or  into
analyzing deeper the existing ones.

In this paper we analyze   a popular class of  algorithms for solving stochastic composite convex optimization problems, that is  stochastic first order (SFO) methods \cite{AtcFor:14,MouBac:11,LacSch:12,Lan:12,NedBer:00,NemJud:09,PatNec:17,SchRou:13}. We assume that we have access to an  unbiased estimate of the gradient or of the proximal operator of a certain function we wish to minimize, which is key to scale up optimization and to address streaming settings where data arrive in time. In these scenarios,  SFO methods
independently sample an  unbiased estimate of the gradient or of the proximal
operator and then take a step along this direction with a certain
stepsize length. These algorithms  are typically the methods of choice in
practice for many applications due to their cheap
iteration and superior empirical performance. However, the
theoretical convergence rates from the literature are  usually sublinear and hold only for decreasing stepsizes. Moreover, the convergence theory treats separately  smooth or non-smooth objective functions and covers  usually unconstrained   optimization models.  On the other hand, in this paper we present  a general framework for the analysis of  SFO  algorithms for solving  general composite optimization problems, expressed in terms of expectation operator. This framework is based on the assumptions that   the objective function satisfies a stochastic bounded gradient condition,  with or without a quadratic functional growth property.  These conditions include the most well-known  classes of objective functions analyzed in the literature:  non-smooth Lipschitz  functions and composition of a (potentially) non-smooth function  and a smooth function, with or without strong convexity. Based on this framework we derive a complete convergence analysis for these SFO methods, that is stochastic proximal gradient and proximal point algorithms.

More specifically, a very popular approach for solving
stochastic optimization problems, where the regularization function
is the indicator function of some simple convex set,  is the
stochastic gradient descent (SGD) algorithm
\cite{LacSch:12,NedBer:00,NemJud:09,SchRou:13}.  In this paper we complement and extend the previous results for SGD to a wider class of stochastic composite optimization problems  having more general assumptions  and to  more general stochastic first order methods.   In particular, we extend the convergence analysis of this well-known
method to composite optimization problems  having a smooth or non-smooth term and a general regularization term that leads to the stochastic proximal gradient (SPG) algorithm.  Other results related to the  convergence behavior of SPG can be found in
\cite{AtcFor:14,Lan:12,RosVil:14}. Furthermore, despite the
fact that the computational performance of SPG  may be good under
certain circumstances, there is recent evidence of its instability
for unappropriate parameters choice \cite{MouBac:11}. To avoid this
behavior of SPG scheme,  we also analyze the convergence behavior of
stochastic proximal point (SPP) algorithm. Papers on SPP related to
our work are e.g. \cite{RyuBoy:16,TouTra:16,PatNec:17}. We show that  these two methods, SPG and SPP,  can achieve linear convergence rate up to a constant proportional to the stepsize and under some strong stochastic bounded gradient condition even pure linear convergence. We also prove that the strong stochastic bounded gradient condition is not only sufficient but also necessary for obtaining linear convergence.  When the strong stochastic bounded gradient condition does not hold we show that restarted variants of these methods can still achieve linear convergence. Moreover, when variable stepsize is chosen we derive sublinear convergence rates for these stochastic first order methods. Finally, the stochastic gradient mapping and the Moreau smoothing mapping from this present paper lead to more elegant and intuitive proofs.

\noindent \textit{Content}: This section continues with the presentation of our problem of interest and the main assumptions. In Section 2  we propose two stochastic first order methods and derive their  convergence rates.  Finally, Section 3 presents a  restarted variant of these methods and  analyze its convergence.

%%%%%%%%%%%%%%%%%%%%%%%%%%%%%%%%%%%%%%%%%%%%%%%
\vspace{-0.3cm}

\subsection{Problem formulation}
\vspace{-0.3cm}
\noindent Let   $f, g \colon \rset^m \times \Omega \to \bar{\rset}$ be  two proper convex
 functions in the first argument, where $\Omega$ is endowed with a
probability distribution $\Prob$.  Then, we consider the following
general composite stochastic optimization problem:
\begin{align}
\label{eq_problem}
F^* & = \min_{x \in \rset^m} F(x) \quad \left( \eqdef f(x) + g(x) \right),
\end{align}
where the two functions have stochastic representations in the form of expectation, i.e.  $f(x) = \Exp{f(x, \xi)}$ and $ g(x) = \Exp{g(x, \xi)}$. We   assume that the expectation taken with respect to the random variable $\xi \in \Omega$ is finite
for all $x \in \textnormal{dom} \ F$. Hence we consider a flexible splitting of our stochastic objective function $F(x) =   \Exp{F(x, \xi)}$ in  composite form  as a sum of two  terms: 
$$ F(x,\xi) = f(x,\xi) + g(x,\xi). $$   In this paper we assume $f(\cdot,\xi)$ either differentiable or nondifferentiable function and we use,  with some abuse of notation, the same notation for the gradient or subgradient of $f(\cdot,\xi)$ at $x$, that is $\nabla f(x,\xi) \in \partial f(x, \xi)$, where $\partial f(x,\xi)$ is either a singleton or a nonempty convex set. The other term, $g(x,\xi)$,  is assumed to be simple and its structure known.  We usually refer to $g$ as the regularizer and use the same notation as before  $\nabla g(x,\xi) \in \partial g(x, \xi)$ for a (sub)gradient of $g(x, \xi)$ at $x$.  We  also assume in the sequel that to have access to either  an unbiased
stochastic estimates of the (sub)gradients of function $f$, i.e.:
\[ \mathbb{E}[\nabla f(x,\xi)] \in \partial f(x), \;\; \text{where}  \;\; \nabla f(x,\xi) \in \partial f(x,\xi),  \] or to
stochastic estimates of the proximal operator of $f$, i.e.:
\[   \prox_{\gamma f(\cdot,\xi)}(x) = \arg
\min\limits_{y \in \rset^m} f(y,\xi) + \frac{1}{2 \gamma} \|y
-x\|^2.\]
We refer to $\prox_{\gamma f (\cdot,\xi)}(x)$ as the proximal operator of the
function $f(\cdot,\xi)$ at $x$ with stepsize $\gamma$.  We  always assume to have access  to  stochastic estimates of the proximal operator of $g$, i.e. $ \prox_{\gamma g(\cdot,\xi)}(x) $. These are  key assumptions in our stochastic optimization to scale up the numerical algorithms. Template \eqref{eq_problem}
covers many applications in machine learning, statistics,  signal
processing, control and other areas,  by appropriately choosing the 
functions $f$ and $g$.

%%%%%%%%%%%%%%%%%%%%%%%%%%%%%%%%%%%%%%%%%%%%%%%%
\vspace{-0.3cm}

\subsection{Main assumptions}
\vspace{-0.3cm}
\noindent Let us denote by $X^*$ the optimal set of the convex problem \eqref{eq_problem} and for any $x \in \rset^m$ we denote its projection onto $X^*$ by $\overline{x}$, that is $\overline{x} = \Pi_{X^*}(x)$. In this paper we consider additionally the following assumptions:
\begin{assumption}
\label{assumption1}
The stochastic (sub)gradients
of $F$ satisfy a stochastic bounded gradient condition restricted on
any segment $[x, \; \overline{x}]$, that is there exist nonnegative
constants  $L \geq 0$ and $B \geq 0$ such that:
\begin{equation}
\label{as:main1_spg} B^2 +  L(F(x) - F(\overline{x})) \geq
\E_{\xi}[\| \nabla F(x,\xi)  \|^2 ] \quad  \forall x \in \textnormal{dom} \ F.
\end{equation}
\end{assumption}

\begin{assumption}
\label{assumption2}
The function  $F$ satisfies a quadratic functional growth condition restricted on
any segment $[x, \; \overline{x}]$, that is there exists  $\mu \geq 0$ such that:
\begin{equation}
\label{as:strong_spg} F(x) - F(\overline{x}) \geq  \frac{\mu}{2} \|x
- \overline{x}\|^2 \quad \forall x \in \textnormal{dom} \ F.
\end{equation}
\end{assumption}

\noindent Note that none of the conditions  \eqref{as:main1_spg} and/or \eqref{as:strong_spg} implies convexity of the function $F$.  When $B=0$ we refer to \eqref{as:main1_spg}  as the \textit{strong stochastic bounded gradient} condition. Note that the strong stochastic  bounded gradient condition has been considered in \cite{NecRic:16} for proving linear convergence of stochastic gradient descent method for solving convex feasibility problems.  In this paper we generalize the strong stochastic bounded gradient condition  from  \cite{NecRic:16} by adding   a positive constant $B$ in order to  cover  the most well-known classes of objective functions analyzed in the literature: non-smooth Lipschitz functions, and composition of a (potentially) non-smooth function and a smooth function. A similar stochastic bounded gradient condition of the form \eqref{as:main1_spg} has been also considered in \cite{DucSin:09} in the context of non-smooth  optimization. 
Further, the quadratic functional growth condition \eqref{as:strong_spg}   has been  considered in \cite{NecNes:15} for deterministic optimization,  where it was proved  that the class of smooth functions satisfying \eqref{as:strong_spg} is the largest one for which gradient method is converging linearly.  We now present several classes of
functions satisfying Assumptions \ref{assumption1} and
\ref{assumption2}. First note that  Assumption \ref{assumption1}  is very general and covers a large class of functionals, such as Lipschitz  functions or functions having Lipschitz continuous gradients.

\noindent \textbf{Example 1} [Non-smooth (Lipschitz) functions satisfy Assumption \ref{assumption1}]:
Assume that the functions $f(\cdot,\xi)$ and $g(\cdot,\xi)$ have bounded
(sub)gradients:
\[ \|\nabla f(x,\xi)\| \leq B_f  \quad \text{and} \quad
\|\nabla g(x,\xi) \| \leq B_g \quad \forall x \in \text{dom} \ F.  \]
Then, obviously Assumption \ref{assumption1} holds with $ L=0 \quad  \text{and} \quad B^2 = 2 B_f^2
+ 2 B_g^2.$

\noindent \textbf{Example 2} [Smooth (Lipschitz gradient) functions satisfy Assumption \ref{assumption1}]:  Condition  \eqref{as:main1_spg}  contains the class of  functions formed as a sum of two terms, one   having Lipschitz continuous gradient  and the other having bounded subgradients. Indeed, let us assume that  $f(\cdot,\xi)$ has  Lipschitz continuous 
gradient, i.e. there exists $L(\xi)>0$ such that:
\[  \|\nabla f(x, \xi) - \nabla f(\bar{x},\xi)\| \leq L(\xi) \|x - \bar{x}\| \quad \forall x \in \text{dom} \ F. \]
Then, using standard arguments we have \cite{Nes:04}:
\[  f(x, \xi) -  f(\bar{x},\xi) \geq  \langle \nabla f(\bar{x}, \xi), x -\bar{x} \rangle    + \frac{1}{2L(\xi)} \|\nabla f(x, \xi) - \nabla f(\bar{x}, \xi) \|^2. \]
Assuming that $L(\xi) \leq L_f$ for all $\xi  \in \Omega$ and $g(\cdot,\xi)$  convex, then adding  $g(x,\xi) - g(\bar{x},\xi) \geq \langle \nabla g(\bar{x},\xi), x -\bar{x} \rangle $ in the previous inequality, where  $\nabla g(\bar{x},\xi) \in \partial g(\bar{x},\xi)$, and then taking expectation  w.r.t. $\xi$, we get:
\begin{align*}
F(x) -  F(\bar{x})  & \geq  \langle \nabla F(\bar{x}), x -\bar{x} \rangle    + \frac{1}{2L_f} \Exp{\|\nabla f(x, \xi) - \nabla f(\bar{x}, \xi) \|^2},
\end{align*}
where we used that $\nabla f(\bar{x},\xi)$ and $\nabla g(\bar{x},\xi)$ are unbiased stochastic estimates of the (sub)gradients of $f$ and $g$ and thus $\nabla F(\bar{x}) = \Exp{\nabla g(\bar{x},\xi) + \nabla g(\bar{x},\xi)} \in \partial F(\bar{x})$.    Using the optimality conditions for  \eqref{eq_problem} in $\bar{x}$, i.e.   $0 \in \partial F(\bar{x})$,  we get:
\begin{align*}
F(x) - F^*  & \geq \frac{1}{2L_f} \Exp{\|\nabla f(x, \xi) - \nabla f(\bar{x}, \xi) \|^2}.
\end{align*}
Therefore, for any $\nabla g(x,\xi) \in \partial g(x,\xi)$ we have:
\begin{align*}
 \Exp{\|\nabla F(x, \xi) \|^2} & =   \Exp{\|\nabla f(x, \xi) - \nabla f(\bar{x}, \xi) +\nabla g(x,\xi) + \nabla f(\bar{x}, \xi) \|^2} \notag\\
&\leq 2 \E[\|\nabla f(x, \xi) - \nabla f(\bar{x}, \xi)\|^2 ] +2 \E[\|\nabla g(x,\xi) + \nabla f(\bar{x}, \xi)\|^2 ]\notag\\
&\leq 4L_f (F(x) - F^*) +2 \E[\|\nabla g(x,\xi)  + \nabla f(\bar{x}, \xi)\|^2 ].
\end{align*}
Assuming now that  the regularization function $g(x,\xi)$ has bounded
subgradients, i.e. $\|\nabla g(x,\xi)\| \leq B_g$, then we get that the stochastic bounded gradient condition  \eqref{as:main1_spg} holds with:
\[  L =  4 L_f \quad \text{and} \quad B^2 =  4  (B_g^2 +\min_{\bar{x} \in X^*} \E[\|\nabla f(\bar{x}, \xi)\|^2]). \]

\noindent Further,  many practical problems satisfy the quadratic functional growth condition \eqref{as:strong_spg}, the most relevant one is given next.

\noindent \textbf{Example 3} [Composition between a strongly convex function and a linear map satisfy Assumption \ref{assumption2}]: Assume $F(x)= \hat f(A^Tx) + g(x)$, where $\hat f$ is a strongly convex function with constant $\sigma_f>0$, $A$ is a matrix of appropriate dimension and $g$ is a polyhedral function. Since  $g$ has a polyhedral
epigraph, then the  optimization problem \eqref{eq_problem} can be equivalently written as:
\[  \min_{x,\zeta} \hat f(A^Tx) + \zeta \qquad \text{s.t.}: \quad Cx + c \zeta \leq d, \]
for some appropriate  matrix $C$ and vectors $c$ and $d$ of
appropriate dimensions. In conclusion, this reformulation leads to the following extended  problem:
\[  \hat{F^*} = \min_{\hat{x}=[x^T \; \zeta]^T}  \hat F(\hat{x}) \quad \left(= \hat f(\hat{A} \hat{x}) + \hat{c}^T \hat{x} \right) \qquad \text{s.t.}: \quad \hat{C} \hat{x}  \leq \hat{d}, \]
where $\hat{A}=[A \; 0], \; \hat{c} =[0 \; 1]^T, \;\hat{C} = [C \; c]$ and $\hat{d} =d$. It can be easily seen that $\hat{x}^* = [(x^*)^T \; \zeta^*]^T$ is an optimal point of this extended optimization problem if $x^*$ is optimal for the original problem and $g(x^*) = \zeta^*$. Moreover, we have $\hat{F^*}= F^*$.  Following a standard argument, as e.g. in \cite{NecNes:15}, there
exist $b^*$ and $s^*$ such that the optimal set of the extended optimization problem  is given by $\hat{X}^* =\{\hat{x}: \; \hat{A}  \hat{x} = b^*, \; \hat{c}^T \hat{x}= s^*, \; \hat{C} \hat{x}  \leq \hat{d} \}$. Further, since $\hat f$ is  strongly convex function with constant $\sigma_f>0$, it follows from \cite{NecNes:15}[Theorem 10] that for any $M>0$ on any sublevel set defined in terms of  $M$ the function  $\hat{F}$ satisfies a quadratic functional growth condition  of the form:
\[  \hat{F}(\hat{x}) - \hat{F^*} \geq \frac{\mu(M)}{2} \|  \hat{x} - \hat{x}^* \|^2 \quad \forall \hat{x}: \; \hat{F}(\hat{x}) - \hat{F^*} \leq M,  \]
where  $\mu(M) = \frac{\sigma_f}{\theta^2 (1 + M \sigma_f + 2 \|\nabla \hat f(\hat{A} \hat{x}^*)\|^2)}$,   with $\hat{x}^* \in \hat{X}^*$ and $\theta$ is the Hoffman bound for the optimal polyhedral set $\hat{X}^*$. Now, setting $\zeta = g(x)$ in the previous inequality, we get  $F(x) - F(\overline{x}) \!\geq\! \frac{\mu(M)}{2} ( \| x - \overline{x}\|^2 + (\zeta - \zeta^*)^2) \!\geq\!  \frac{\mu(M)}{2} \| x - \overline{x}\|^2$ for all $x:  F(x) - F(\overline{x}) \!\leq\!  M$.
In conclusion,  the objective function  $F$ satisfies the quadratic functional growth condition \eqref{as:strong_spg} on any sublevel set, that is for any $M>0$ there exists $\mu(M)>0$ defined above such that:
\[  F(x) - F(\overline{x}) \geq  \frac{\mu(M)}{2} \| x - \overline{x}\|^2
\quad \forall x: \;  F(x) - F(\overline{x}) \leq  M. \]
The quadratic functional growth condition \eqref{as:strong_spg} is a relaxation of strong convexity notion, see \cite{NecNes:15} for a more detailed
discussion. Clearly, any strongly convex function $F$ satisfies \eqref{as:strong_spg} (see e.g. \cite{Nes:04}).

%%%%%%%%%%%%%%%%%%%%%%%%%%%%%%%%%%
\vspace{-0.4cm}

\subsection{Preliminaries}
\vspace{-0.3cm}
The following results  are  also useful in the sequel for proving sublinear convergence rates for the stochastic first order methods that we analyze in this paper. They are simple adaptations of some  standard recurrences for $t \geq 0$ to $t \geq t_0$, for some finite index  $t_0$,   see e.g. \cite{Pol:87,NedBer:00,MouBac:11,PatNec:17,RosVil:14}.
%For completeness, we give their proofs in Appendix.

\begin{lemma} [Lemma 4, \cite{Pol:87}]
\label{lem_polyak1} If there exist constants $c, d>0$ and  finite
index  $t_0 \geq 0$ such that the nonnegative sequence $r_t$ has
$r_{t_0}$ finite and satisfies the recurrence:
\[ r_{t+1} \leq \left( 1  - \frac{c}{t+1}\right) r_t + \frac{d}{(t+1)^2} \qquad \forall t
\geq t_0, \]  then $r_t$ can be bounded as:
\begin{align*}
r_{t} \leq
\left\{\begin{array}{lr}
\frac{ 2t_0r_{t_0} + 2d(1+ \log((t+1)/(t_0+1)))}{t+1}& \text{if}  \quad c=1 \\
\left( r_{t_0} (t_0+1)^c + \frac{2d(2-c)(t_0+1)^c}{1-c}\right)
 \frac{1}{(t+1)^c} & \text{if} \quad c<1 \\
\left((t_0+1)r_{t_0} + \frac{d}{c-1}\right) \frac{1}{t+1} &
\text{if}  \quad  c >1
\end{array}\right.
\quad \forall t \geq t_0.
\end{align*}
\end{lemma}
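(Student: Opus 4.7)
The plan is induction on $t \geq t_0$ in each of the three regimes $c=1$, $c<1$, $c>1$ separately. In every case the strategy is to introduce a candidate majorant $s_t$ engineered so that (i) $r_{t_0} \leq s_{t_0}$ by direct substitution, and (ii) $s_t$ satisfies the reverse-sign recurrence
$$s_{t+1} \geq \left(1 - \frac{c}{t+1}\right) s_t + \frac{d}{(t+1)^2} \qquad \forall t \geq t_0.$$
Granting (i) and (ii), one reads off $r_t \leq s_t$ for all $t \geq t_0$ by immediate induction using the hypothesised recurrence on $r_t$, which is exactly the claim.

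The choice of majorant is forced by the form of the bound in the statement. For $c \neq 1$ one takes $s_t = A/(t+1)^c$ with $A$ the bracketed constant appearing in the lemma; for $c = 1$ one takes $s_t = (A + B \log(t+1))/(t+1)$ with $A = 2 t_0 r_{t_0} + 2d(1 - \log(t_0+1))$ and $B = 2d$, which is just a rewrite of the claimed bound. Verification of (i) is a one-line substitution. For (ii), after clearing denominators, everything reduces to two elementary comparison inequalities: Bernoulli's inequality
$$\left(\frac{t+2}{t+1}\right)^c \geq 1 + \frac{c}{t+1} \quad (c \geq 0),$$
its companion $(1 - c/(t+1)) \leq (t/(t+1))^c$ on $c \in [0,1]$, and the standard estimate $\log(1 + 1/(t+1)) \leq 1/(t+1)$ in the borderline case.

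The main obstacle, and the reason the three cases genuinely split, is the balance between the multiplicative contraction factor $1 - c/(t+1)$ and the additive noise term $d/(t+1)^2$. When $c>1$ the contraction wins and an $O(1/t)$ majorant suffices, with the constant $d/(c-1)$ arising from a telescoping/geometric-series estimate of the form $\sum_{k \geq t_0} k^{c-2}/k^c \sim 1/(c-1)$. When $c<1$ the contraction is too weak and one must slow the majorant to $O(1/t^c)$; the factor $(2-c)/(1-c)$ comes from a sharper Bernoulli expansion needed to absorb the noise. The case $c=1$ is logarithmically borderline, forcing the $\log$ correction, and the explicit coefficient $2$ in front of $d$ is what is needed so that (ii) holds for every $t \geq t_0$ rather than only asymptotically.

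The one bookkeeping point that needs care is the shift of the starting index from $0$ to $t_0$: this is exactly what the factor $(t_0+1)^c$, respectively the term $(t_0+1) r_{t_0}$ or $2 t_0 r_{t_0}$, compensates for in the three bounds. With the candidates $s_t$ chosen as above these compensations are automatic, so the whole argument reduces to the two Bernoulli-type estimates stated above plus a routine induction.
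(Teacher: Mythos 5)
The paper does not actually prove this lemma---it is imported from Polyak's book with the remark that it is a ``simple adaptation'' of a standard recurrence---so your proposal can only be judged on its own merits, and there it has two genuine gaps. First, your ``immediate induction'' from $r_t\le s_t$ to $r_{t+1}\le s_{t+1}$ multiplies the inductive hypothesis by the factor $1-\frac{c}{t+1}$, which is only legitimate when that factor is nonnegative, i.e.\ when $t\ge c-1$. For $c>1$ and $t_0<c-1$ the step is invalid, and this is not a repairable technicality: the stated bound itself can fail there. For instance, with $c=3$, $d=1$, $t_0=0$ and the admissible nonnegative sequence $r_0=0$, $r_1=\tfrac12$, $r_2=r_3=\cdots=0$ (one checks $0\le(1-\tfrac32)\tfrac12+\tfrac14$), the claimed bound gives $r_1\le\tfrac14$. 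So in the case $c>1$ you must add the hypothesis $t_0\ge c-1$ (which is how the paper in fact uses the lemma, with $t_0=\lfloor cL\rfloor$) before any majorant argument can succeed.

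Second, and more damaging to the overall strategy, in the case $c<1$ the reverse recurrence (ii) is \emph{false} for the majorant $s_t=A(t+1)^{-c}$ once $t$ is large, no matter how large the constant $A$ is. Indeed, by the mean value theorem the one-step slack satisfies
\begin{align*}
s_{t+1}-\Bigl(1-\tfrac{c}{t+1}\Bigr)s_t
= Ac\Bigl[(t+1)^{-c-1}-\theta^{-c-1}\Bigr]
\le Ac(c+1)(t+1)^{-c-2}
\end{align*}
for some $\theta\in(t+1,t+2)$, and $(t+1)^{-c-2}=o\bigl((t+1)^{-2}\bigr)$ since $c>0$; hence the slack cannot absorb the noise term $d(t+1)^{-2}$ for large $t$ (already for $c=\tfrac12$, $d=1$, $t_0=0$ one gets $A=6$ and the inequality fails numerically for moderate $t$). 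So ``Bernoulli plus routine induction'' does not close this case. The standard repair is a different argument: unroll the recurrence to
$r_t\le r_{t_0}\prod_{i=t_0}^{t-1}\bigl(1-\tfrac{c}{i+1}\bigr)+d\sum_{j=t_0}^{t-1}\tfrac{1}{(j+1)^2}\prod_{i=j+1}^{t-1}\bigl(1-\tfrac{c}{i+1}\bigr)$, bound each product by $\bigl(\tfrac{j+2}{t+1}\bigr)^{c}$ via $1-x\le e^{-x}$ and an integral comparison, and then sum the convergent series $\sum_j (j+1)^{c-2}$, which is where the factor $\tfrac{2-c}{1-c}$ actually comes from. Your case $c>1$ verification of (ii) and the $c=1$ majorant are fine as computations (modulo the base-case issue that for $t_0=0$ the $c=1$ bound requires $r_0\le 2d$), but as written the proposal proves only the $c=1$ case with $t_0\ge1$.
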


\begin{lemma} [Lemma 5, \cite{Pol:87}; Theorem 14, \cite{PatNec:17}]
\label{lem_polyak2} If there exist constants $c, d>0$, $\gamma \in (0, 1)$ and $\zeta > \gamma$ and  finite index  $t_0 \geq 0$ such that the nonnegative sequence $r_t$ has
$r_{t_0}$ finite and satisfies the recurrence:
\[ r_{t+1} \leq \left( 1  - \frac{c}{(t+1)^\gamma}\right) r_t + \frac{d}{(t+1)^\zeta} \qquad \forall t \geq t_0, \]  then $r_t$ can be bounded as:
$
r_{t} \leq  {\cal O}\left(  \frac{1}{t^{\zeta-\gamma}} \right) r_{t_0} \quad  \forall t \geq t_0.
$
\end{lemma}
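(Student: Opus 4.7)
The plan is to transform the recurrence via the substitution $u_t = t^{\zeta-\gamma} r_t$, for which proving $r_t = \mathcal{O}(t^{-(\zeta-\gamma)})$ becomes equivalent to proving $u_t$ is bounded. Writing $\alpha = \zeta-\gamma > 0$ and multiplying the given inequality by $(t+1)^\alpha$, the recurrence becomes
\begin{equation*}
u_{t+1} \le \Bigl(1 - \tfrac{c}{(t+1)^\gamma}\Bigr)\Bigl(1+\tfrac{1}{t}\Bigr)^{\alpha} u_t + \frac{d}{(t+1)^\gamma},
\end{equation*}
since $(t+1)^{\alpha-\zeta} = (t+1)^{-\gamma}$. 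The key structural change is that the forcing term now matches the order of the contraction, so the limiting behavior is governed by a nontrivial fixed point rather than decaying to zero.

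I would next Taylor-expand $(1+1/t)^\alpha \le 1 + \alpha/t + C_\alpha/t^2$ and observe that, because $\gamma<1$, the perturbation $\alpha/t$ is $o((t+1)^{-\gamma})$. Hence for any chosen $c'\in(0,c)$ there is an index $t_1 \ge t_0$ from which on
\begin{equation*}
\Bigl(1 - \tfrac{c}{(t+1)^\gamma}\Bigr)\Bigl(1+\tfrac{\alpha}{t}+\tfrac{C_\alpha}{t^2}\Bigr) \le 1 - \tfrac{c'}{(t+1)^\gamma},
\end{equation*}
and on the tail $t\ge t_1$ the recurrence simplifies to $u_{t+1} \le (1-c'/(t+1)^\gamma)\,u_t + d/(t+1)^\gamma$.

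The affine map on the right has exact fixed point $u^\star = d/c'$, independent of $t$. Subtracting it gives $u_{t+1} - u^\star \le (1-c'/(t+1)^\gamma)(u_t - u^\star)$, from which a one-line induction yields $u_t \le \max(u_{t_1}, u^\star)$ for every $t \ge t_1$: when $u_t \le u^\star$ the right-hand side is at most $u^\star$, and when $u_t > u^\star$ the quantity $u_t - u^\star$ is non-increasing. Consequently $u_t$ is uniformly bounded by a constant depending only on $t_0, r_{t_0}, c, d, \gamma, \zeta$, giving $r_t = u_t/t^\alpha = \mathcal{O}(t^{-(\zeta-\gamma)})$. The finitely many indices $t_0 \le t < t_1$ are absorbed into the big-$\mathcal{O}$ constant through the iterated recurrence used to bound $u_{t_1}$, and the factor $r_{t_0}$ in the stated conclusion is inherited from this bound on $u_{t_1}$.

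The principal obstacle is the linearization step: verifying quantitatively that $\alpha/t + C_\alpha/t^2$ can be dominated by $c/(t+1)^\gamma$ uniformly for $t\ge t_1$. This relies essentially on the hypothesis $\gamma<1$; for $\gamma=1$ the $1/t$ correction would be of the same order as the contraction and the argument would degrade. Once the linearization is secured the fixed-point argument is routine and the conclusion follows.
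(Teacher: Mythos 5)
The paper does not actually prove this lemma --- it is quoted from Polyak's book and from Theorem 14 of the cited Patrascu--Necoara paper, with the remark that it is a ``simple adaptation'' of a standard recurrence to the starting index $t_0$. So there is no in-paper proof to compare against; judged on its own, your argument is correct and is essentially the standard one (a Chung/Polyak-type lemma): the substitution $u_t=t^{\zeta-\gamma}r_t$ followed by the observation that $u_t$ stays below the fixed point $d/c'$ is the same computation as the usual direct induction on the hypothesis $r_t\le A\,t^{-(\zeta-\gamma)}$, just packaged as a change of variables. The two genuinely load-bearing points --- that $(1+1/t)^{\zeta-\gamma}-1=O(1/t)=o\bigl((t+1)^{-\gamma}\bigr)$ precisely because $\gamma<1$, and that the forcing term $d/(t+1)^{\zeta}$ rescales to $d/(t+1)^{\gamma}$, matching the order of the contraction --- are both identified and handled correctly, and the absorption of the finitely many indices $t_0\le t<t_1$ into the constant is legitimate since the recurrence keeps $r_t$ finite there.

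Two small remarks, neither of which is a gap in your reasoning. First, the substitution degenerates at $t=0$ (both $u_0=0^{\alpha}r_0$ and the factor $1+1/t$); start the bookkeeping at $t=\max(t_0,1)$. Second, the conclusion as printed, $r_t\le {\cal O}(t^{-(\zeta-\gamma)})\,r_{t_0}$, cannot be literally right when $r_{t_0}=0$, because the forcing term $d$ contributes to the limit regardless; your bound $u_t\le\max(u_{t_1},d/c')$ correctly shows the hidden constant depends on both $r_{t_0}$ and $d/c$, which is the honest form of the statement and the one the paper actually uses downstream.
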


%%%%%%%%%%%%%%%%%%%%%%%%%%%%%%%%%%%%%%%%
%%%%%%%%%%%%%%%%%%%%%%%%%%%%%%%%%%%%%%%%%%%
\vspace{-0.4cm}

\section{Stochastic first order methods}
Our general composite stochastic optimization problem \eqref{eq_problem} is  flexible, allowing us to deal with objective functions having specific structures. Specifically, we can assume that the first term  $f(\cdot,\xi)$  is  given by a black-box (sub)gradient oracle or is proximal friendly.  Based on these properties we propose two stochastic first order methods for solving the composite optimization problem, that is   the stochastic proximal gradient and the stochastic proximal point algorithms.

%%%%%%%%%%%%%%%%%%%%%%%%%%%%%%%5
\vspace{-0.2cm}

\subsection{Stochastic proximal gradient (SPG)}
\vspace{-0.3cm}
In this section  we consider stochastic composite objective functions formed as a sum of two convex terms: first term, $f(\cdot,\xi)$,  is  given by a black-box (sub)gradient oracle and consequently we have access to  unbiased stochastic estimates of the (sub)gradients of $f$ in the sense that $\mathbb{E}[\nabla f (x,\xi)]  \in  \partial f(x)$, and second term, $g(\cdot,\xi)$,  admits an easily computable proximal mapping. Regularizers $g(\cdot,\xi)$ that admit closed form solution of the prox operator is e.g.  Lasso type $\lambda \|x\|_1$ or elastic net $\lambda_1 \|x\|^2 + \lambda_2 \|x\|_1$ for which the prox can be computed via a soft-thresholding function.  Therefore,  for solving the general composite problem  \eqref{eq_problem}  we consider the stochastic proximal gradient (SPG) algorithm:
\begin{algorithm}
Let $(\xi_t)_{t \in \nset}$ be an i.i.d sequence and $x_0 \in
\rset^m$. Iterate:
\begin{equation}
\label{eq_spgm}  \nabla f(x_t,\xi_t) \in\partial f(x_t,\xi_t) \quad  \text{and} \quad x_{t+1} = \prox_{\gamma_t g(\cdot,\xi_t)}(x_t - \gamma_t \nabla f(x_t,\xi_t) ),
\end{equation}
where $(\gamma_t)_{t \in \nset}$ is a strictly positive sequence of
stepsizes.
\end{algorithm}

\noindent Note that when  $g(\cdot,\xi)$ is the indicator function of a simple
non-empty closed convex set $C_\xi$, that is $g(x,\xi)=\textbf{1}_{C_\xi}(x)$, then the
previous SPG algorithm becomes a stochastic projected  (sub)gradient
descent method \cite{LacSch:12,NedBer:00,NemJud:09}: $$x_{t+1} = \Pi_{C_{\xi_t}}(x_t
- \gamma_t \nabla f(x_t,\xi_t)).$$  Let us define the stochastic
gradient mapping (for simplicity we omit its dependence on stepsize~$\gamma$):
\[  \mathcal{G}(x;\xi) = \gamma^{-1} \left( x - \prox_{\gamma g(\cdot,\xi)}(x - \gamma {\nabla} f(x,\xi)) \right). \]
Then, it follows immediately that  the previous stochastic proximal
gradient iteration can  be written as:
\[ x_{t+1} = x_t - \gamma_t \mathcal{G}(x_t;\xi_t).  \]
Moreover, from  optimality condition of prox operator there exists
$\nabla g(x_{t+1},\xi_t) \in \partial g(x_{t+1},\xi_t)$ such that:
\[  \mathcal{G}(x_t;\xi_t) =  \nabla f(x_t,\xi_t)  + \nabla g(x_{t+1},\xi_t).  \]

\noindent Denote  also $\xi_{[t]} = \{\xi_0,\ldots, \xi_{t}\} $.  The next theorem provides a descent property for  SPG and for the  proof we use as main tool  stochastic
gradient mapping~$\mathcal{G}(\cdot)$.

\begin{theorem}
\label{th:spg_basic} Let  $f(\cdot, \xi)$ and $g(\cdot, \xi)$ be  convex functions such that $g$ admits an easily computable proximal operator. Additionally,  assume that the stochastic bounded gradient condition from   Assumption \ref{assumption1} holds. Then, for any $t \geq 0$ and  stepsize $\gamma_t >0$, we have the following recursion  for the   SPG iteration:
\begin{align}
\label{spg_basic} & \Exp{\|x_{t+1} - \overline{x}_{t+1} \|^2}  \leq
\Exp{\| x_t - \overline{x}_{t} \|^2} -  \gamma_t  (2 - \gamma_t L)
\Exp{ F(x_t) - F(\overline{x}_{t})} + \gamma_t^2 B^2.
\end{align}
\end{theorem}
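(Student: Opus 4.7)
\medskip

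\noindent\textbf{Proof sketch.} The plan is to first establish the deterministic (pathwise) one-step inequality
\[ \|x_{t+1}-\overline{x}_t\|^2 \,\le\, \|x_t-\overline{x}_t\|^2 - 2\gamma_t\bigl[F(x_t,\xi_t)-F(\overline{x}_t,\xi_t)\bigr] + \gamma_t^2 \|\nabla F(x_t,\xi_t)\|^2 \qquad (\star) \]
for some subgradient $\nabla F(x_t,\xi_t)\in\partial F(x_t,\xi_t)$, and then to take expectations and apply Assumption~\ref{assumption1}. The switch from $\overline{x}_t$ on the right to $\overline{x}_{t+1}$ on the left is free of charge because $\overline{x}_{t+1}=\Pi_{X^*}(x_{t+1})$ is the nearest point of $X^*$ to $x_{t+1}$, so $\|x_{t+1}-\overline{x}_{t+1}\|^2\le\|x_{t+1}-\overline{x}_t\|^2$.

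To derive $(\star)$, I will start from the update $x_{t+1}=x_t-\gamma_t\mathcal{G}(x_t;\xi_t)$ and expand the square. Rewriting $x_t-\overline{x}_t=(x_{t+1}-\overline{x}_t)+\gamma_t\mathcal{G}$ converts the cross term into $\langle\mathcal{G},x_{t+1}-\overline{x}_t\rangle$, which is exactly the form adapted to $\mathcal{G}=\nabla f(x_t,\xi_t)+\nabla g(x_{t+1},\xi_t)$: convexity of $f(\cdot,\xi_t)$ at $x_t$ and of $g(\cdot,\xi_t)$ at $x_{t+1}$, added together, give a lower bound on $\langle\mathcal{G},x_{t+1}-\overline{x}_t\rangle$ in terms of $f(x_t,\xi_t)+g(x_{t+1},\xi_t)-F(\overline{x}_t,\xi_t)$ plus a $-\gamma_t\langle\nabla f(x_t,\xi_t),\mathcal{G}\rangle$ correction that arises from splitting $\langle\nabla f(x_t,\xi_t),x_{t+1}-\overline{x}_t\rangle$ via $x_{t+1}=x_t-\gamma_t\mathcal{G}$. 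Plugging this back and collecting the quadratic $\mathcal{G}$ terms via $\nabla g(x_{t+1},\xi_t)=\mathcal{G}-\nabla f(x_t,\xi_t)$ produces the intermediate bound $-2\gamma_t[f(x_t,\xi_t)+g(x_{t+1},\xi_t)-F(\overline{x}_t,\xi_t)]+\gamma_t^2\|\nabla f(x_t,\xi_t)\|^2-\gamma_t^2\|\nabla g(x_{t+1},\xi_t)\|^2$.

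The main obstacle will be that both $g$ and its subgradient appear at $x_{t+1}$, while Assumption~\ref{assumption1} controls subgradients of $F$ evaluated at $x_t$. To bridge this, I pick any $\tilde s\in\partial g(x_t,\xi_t)$ and apply convexity in the opposite direction, $g(x_{t+1},\xi_t)\ge g(x_t,\xi_t)+\langle\tilde s,x_{t+1}-x_t\rangle$, which swaps $g(x_{t+1},\xi_t)$ for $g(x_t,\xi_t)$ at the cost of an extra $+2\gamma_t^2\langle\tilde s,\mathcal{G}\rangle$. The elementary identity $2\gamma_t^2\langle\tilde s,\nabla g(x_{t+1},\xi_t)\rangle-\gamma_t^2\|\nabla g(x_{t+1},\xi_t)\|^2=\gamma_t^2\|\tilde s\|^2-\gamma_t^2\|\tilde s-\nabla g(x_{t+1},\xi_t)\|^2\le\gamma_t^2\|\tilde s\|^2$ then collapses the three remaining second-order terms into the single perfect square $\gamma_t^2\|\nabla f(x_t,\xi_t)+\tilde s\|^2=\gamma_t^2\|\nabla F(x_t,\xi_t)\|^2$ with $\nabla F(x_t,\xi_t):=\nabla f(x_t,\xi_t)+\tilde s\in\partial F(x_t,\xi_t)$, delivering $(\star)$. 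Taking conditional expectation in $\xi_t$ (given $\xi_{[t-1]}$) turns $F(x_t,\xi_t)-F(\overline{x}_t,\xi_t)$ into $F(x_t)-F(\overline{x}_t)$; invoking Assumption~\ref{assumption1} bounds $\E[\|\nabla F(x_t,\xi_t)\|^2]\le B^2+L(F(x_t)-F(\overline{x}_t))$; taking outer expectation, applying the projection inequality, and regrouping the two terms proportional to $F(x_t)-F(\overline{x}_t)$ into $-\gamma_t(2-\gamma_t L)\,\Exp{F(x_t)-F(\overline{x}_t)}$ produces the claimed recursion \eqref{spg_basic}.
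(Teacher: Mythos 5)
Your proposal is correct and follows essentially the same route as the paper's proof: both rest on the projection inequality $\|x_{t+1}-\overline{x}_{t+1}\|\le\|x_{t+1}-\overline{x}_t\|$, the decomposition $\mathcal{G}(x_t;\xi_t)=\nabla f(x_t,\xi_t)+\nabla g(x_{t+1},\xi_t)$, convexity of $g(\cdot,\xi_t)$ invoked at both $x_{t+1}$ and $x_t$ to shift the function value back to $x_t$, a square-completion that discards the same nonnegative remainder $\|\nabla g(x_t,\xi_t)-\nabla g(x_{t+1},\xi_t)\|^2$ to arrive at $\gamma_t^2\|\nabla F(x_t,\xi_t)\|^2$, and conditional expectation plus Assumption \ref{assumption1}. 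Your anchoring of the cross term at $x_{t+1}-\overline{x}_t$ rather than $x_t-\overline{x}_t$ is only a cosmetic reorganization of the same algebra.
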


\begin{proof}
From the definition of SPG iteration we
have:
\begin{align}
\label{der_spg_basic}
& \| x_{t+1} - \overline{x}_{t+1} \|^2  \leq   \| x_{t+1} - \overline{x}_{t} \|^2 =  \| x_{t} - \overline{x}_{t} - \gamma_t \mathcal{G}(x_t;\xi_t)\|^2 \\
& =  \| x_{t} - \overline{x}_{t} \|^2  - 2 \gamma_t \langle \mathcal{G}(x_t;\xi_t), x_t  - \overline{x}_{t} \rangle + \gamma_t^2 \| \mathcal{G}(x_t;\xi_t) \|^2  \nonumber  \\
& = \| x_{t} - \overline{x}_{t} \|^2  - 2 \gamma_t \langle \nabla
f(x_t,\xi_t) + \nabla g(x_{t+1},\xi_t), x_t  - \overline{x}_{t} \rangle +
\gamma_t^2 \| \mathcal{G}(x_t;\xi_t) \|^2.\nonumber 
\end{align}
Now, we refine the second term. First, from convexity of $f$ we
have:
\[ \langle \nabla f(x_t,\xi_t), x_t  - \overline{x}_{t} \rangle \geq f(x_t,\xi_t) - f(\overline{x}_{t},\xi_t). \]
Then, from  convexity of $g$ and the definition of  stochastic
gradient mapping $\mathcal{G}(\cdot)$, we have:
\begin{align*}
&\langle \nabla g(x_{t+1},\xi_t), x_t  - \overline{x}_{t} \rangle  = \langle  \nabla g(x_{t+1},\xi_t), x_t  - x_{t+1} \rangle + \langle  \nabla g(x_{t+1},\xi_t), x_{t+1}  - \overline{x}_{t} \rangle \\
& \geq \gamma_t \| \mathcal{G}(x_t;\xi_t) \|^2 - \gamma_t \langle \nabla f(x_t,\xi_t), \mathcal{G}(x_t;\xi_t) \rangle + g(x_{t+1}, \xi_t) - g(\overline{x}_t,\xi_t)\\
& \geq \gamma_t \| \mathcal{G}(x_t;\xi_t) \|^2 - \gamma_t \langle
\nabla f(x_t,\xi_t) + \nabla g(x_t,\xi_t), \mathcal{G}(x_t;\xi_t) \rangle +
g(x_{t}, \xi_t) - g(\overline{x}_t, \xi_t).
\end{align*}
Replacing the previous two inequalities in \eqref{der_spg_basic}, we
obtain:
\begin{align*}
\| x_{t+1} - \overline{x}_{t+1} \|^2 & \leq  \| x_{t} -
\overline{x}_{t} \|^2
 - 2 \gamma_t  \left( f(x_t,\xi_t) + g(x_t, \xi_t) - f(\overline{x}_{t},\xi_t) - g(\overline{x}_{t}, \xi_t)\right)  \\
& \quad + 2 \gamma_t^2 \langle \nabla f(x_t,\xi_t) + \nabla g(x_t, \xi_t),
\mathcal{G}(x_t;\xi_t) \rangle - \gamma_t^2 \|
\mathcal{G}(x_t;\xi_t) \|^2.
\end{align*}
Since $2\langle u , v \rangle - \|v\|^2 \leq \|u\|^2$ for
all $v \in \rset^n$ and using that $ F(x_t,\xi_t)  =  f(x_t,\xi_t) +  g(x_t, \xi_t)$ and $\nabla F(x_t,\xi_t)  = \nabla f(x_t,\xi_t) + \nabla g(x_t, \xi_t)$, we further get:
\begin{align*}
\| x_{t+1} - \overline{x}_{t+1} \|^2 & \leq \| x_{t} -
\overline{x}_{t} \|^2
 - 2 \gamma_t \left(  F(x_t,\xi_t)  - F(\overline{x}_{t},\xi_t) \right)  +  \gamma_t^2 \| \nabla F(x_t,\xi_t)\|^2.
\end{align*}
Note that $x_t$ depends only on history $\xi_{[t-1]} =
\{\xi_0,\ldots, \xi_{t-1}\} $, not $\xi_t$. It follows from the
basic property of  conditional expectation that:
\begin{align*}
& \E_{\xi_{[t]}} [\|x_{t+1}  - \overline{x}_{t+1}\|^2] \\
& \leq \E_{\xi_{[t-1]}}[\|x_{t} - \overline{x}_{t}\|^2]  - 2 \gamma_t  \E_{\xi_{[t-1]}}
\left[ \E_{\xi_t} [ F(x_t,\xi_t) - F(\overline{x}_{t},\xi_t)  | \xi_{[t-1]}
]\right] \\
& \qquad +  \gamma_t^2 \E_{\xi_{[t]}}  \left[\| \nabla F(x_t,\xi_t) \|^2 ] \right]\\
& =  \E_{\xi_{[t-1]}}[\|x_{t} \!- \overline{x}_{t}\|^2]  - 2
\gamma_t \E_{\xi_{[t-1]}} \left[ F(x_t) - F(\overline{x}_{t})\right]
+ \gamma_t^2 \E_{\xi_{[t]}}  \left[\| \nabla F(x_t,\xi_t) \|^2 \right].
\end{align*}
Further, making use of the stochastic bounded gradient condition given in  Assumption
\ref{assumption1} in the previous relation we get:
\begin{align*}
& \E_{\xi_{[t]}} [\|x_{t+1}  - \overline{x}_{t+1}\|^2] \\
& \leq \E_{\xi_{[t-1]}}[\|x_{t} \!- \overline{x}_{t}\|^2]  - 2
\gamma_t \E_{\xi_{[t-1]}} \left[ F(x_t) - F(\overline{x}_{t})\right]
+ \gamma_t^2 \E_{\xi_{[t]}}  \left[\| \nabla F(x_t,\xi_t) \|^2
\right] \\
& \overset{\eqref{as:main1_spg}}{\leq} \! \E_{\xi_{[t-1]}}[\|x_{t}
- \overline{x}_{t}\|^2]  - 2 \gamma_t  \E_{\xi_{[t-1]}} \left[
F(x_t) - F(\overline{x}_{t})\right] \\
& \qquad + \gamma_t^2 \left( B^2 + L
\E_{\xi_{[t-1]}} \!\left[F(x_t) - F(\overline{x}_{t})\right] \right)
\\
& = \E_{\xi_{[t-1]}}[\|x_{t} \!- \overline{x}_{t}\|^2]  - \gamma_t
(2 - \gamma_t L) \E_{\xi_{[t-1]}} \left[ F(x_t) -
F(\overline{x}_{t})\right] + \gamma_t^2 B^2,
\end{align*}
which, omitting the dependence  of expectation on $\xi_{[t]}$,  proves the  statement of the theorem. \qed
\end{proof}

%%%%%%%%%%%%%%%%%%%%%%%%%%%%%%%%%%%%%%%%%%%%
\vspace{-0.3cm}

\subsection{Stochastic proximal point (SPP)}
\vspace{-0.3cm}
\noindent  Flexibility in the general optimization problem  \eqref{eq_problem} allow  us to also   consider   simple proximal convex functions $f(\cdot, \xi)$ and $g(\cdot, \xi)$.    In this case it is much better  to use the entire function than just the gradient. Typical examples of functions $f(\cdot, \xi)$ that admit easily computable prox operators are e.g. the quadratic function $(z_\xi^Tx - y_\xi)^2$ or hinge loss $\max(0, 1-y_\xi z_\xi^T x)$ whose prox can be computed in closed form in  ${\cal O}(n)$ operations, or logistic function $\log (1 + e^{-y_\xi z_\xi^T x})$ whose prox operator does not have closed form expression,  but it can be computed very efficiently using Newton iteration on a univariate optimization problem.  Therefore, we also present an algorithm, which we call stochastic proximal point (SPP), where at each iteration we first compute the proximal mapping with respect to the
given  function  $f(\cdot, \xi)$ to the previous iterate and then we perform
the same strategy for the regularization function  $g(\cdot, \xi)$. More precisely,
for solving the general composite problem \eqref{eq_problem}  we consider the following
 SPP algorithm:
\begin{algorithm}
Let $(\xi_t)_{t \in \nset}$ be an i.i.d sequence, and $x_0 \in
\rset^m$. Iterate:
\begin{equation}
\label{eq_sppm}  x_{t+1/2} = \prox_{\gamma_t
f(\cdot,\xi_t)}(x_t) \quad \text{and} \quad x_{t+1}  =
\prox_{\gamma_t g}(x_{t+1/2}),
\end{equation}
where $(\gamma_t)_{t \in \nset}$ is a strictly positive sequence of
stepsizes.
\end{algorithm}

\noindent Note that  SPG can be viewed as   SPP method applied  to the
linearization of $f(z;\xi)$ in $x$, that is to the linear function:
$\ell_f(z;x,\xi) = f(x;\xi) + \langle  \nabla f(x;\xi), z - x \rangle.$
Of course, when $f$ has an easily computable proximal
operator, it is natural to use $f$ instead of its linearization
$\ell_f$.  Let us first derive some basic property for  the $\prox$ operator.
We first recall that  any strongly convex function $h$ with
convexity constant $\sigma_h$ and having the optimal point $x^*$
satisfies the following inequality~\cite{Nes:04}: $h(x) \geq  h(x^*) + \frac{\sigma_h}{2}\|x - x^*\|^2 \quad \forall x$. Since for any convex function $h$ and fixed  point $x$, the Moreau smoothing  function  $h_\gamma(y;x) = h(y) +  \frac{1}{2 \gamma} \|y -x\|^2$ is
strongly convex in the first argument $y$ with strong convexity constant
$1/\gamma$, the following holds for all $x, y \in \rset^n$:
\begin{align}
\label{rel1_prox} 
& h(y)  \!+\!  \frac{1}{2 \gamma} \|y - x\|^2   \!\geq\!  h(\prox_{\gamma h}(x)) \!+\! \frac{1}{2 \gamma} \|\prox_{\gamma h}(x)
- x\|^2 \!+\! \frac{1}{2 \gamma} \|\prox_{\gamma h}(x) - y\|^2. 
\end{align}

\noindent The next theorem provides a descent property for the SPP
iteration and its proof is based on   the
previous  Moreau smoothing condition.

\begin{theorem}
\label{th:spp_basic} Let $f(\cdot, \xi)$ and $g(\cdot, \xi)$ be convex functions  that  admit easily computable proximal operators. Additionally, assume that the  stochastic bounded gradient condition from Assumption \ref{assumption1} holds and $g(\cdot, \xi)$ have bounded subgradients, that is there exists $B_{g}>0$ such that $ \|\nabla g(x, \xi)\| \leq B_g$ for all $x$ and $\xi$. Then, for any stepsize $\gamma_t>0$ we have the following recursive inequality for the SPP iteration:
\begin{align}
\label{spp_basic} & \Exp{\|x_{t+1} - \overline{x}_{t+1} \|^2} \leq \\
&  \Exp{\| x_t - \overline{x}_{t} \|^2}   - \gamma_t (2 - \gamma_t L) \Exp{ F(x_t) -
F(\overline{x}_{t})}  + \gamma_t^2 \left( B^2 + B_{g}^2 \right). \nonumber
\end{align}
\end{theorem}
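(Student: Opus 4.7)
The plan is to mirror the SPG proof of Theorem \ref{th:spg_basic} using the Moreau smoothing inequality \eqref{rel1_prox} in place of the stochastic gradient mapping identities. I would apply \eqref{rel1_prox} twice: first with $h=f(\cdot,\xi_t)$, $x=x_t$, $y=\overline{x}_t$ to bound $\|x_{t+1/2}-\overline{x}_t\|^2$, and then with $h=g(\cdot,\xi_t)$, $x=x_{t+1/2}$, $y=\overline{x}_t$ to bound $\|x_{t+1}-\overline{x}_t\|^2$. Summing the two and using the nonexpansivity $\|x_{t+1}-\overline{x}_{t+1}\|\le \|x_{t+1}-\overline{x}_t\|$ (since $\overline{x}_{t+1}$ is the nearest point of $X^*$ to $x_{t+1}$) will yield
$$\|x_{t+1}-\overline{x}_{t+1}\|^2 \le \|x_t-\overline{x}_t\|^2 - 2\gamma_t\bigl[f(x_{t+1/2},\xi_t)+g(x_{t+1},\xi_t)-F(\overline{x}_t,\xi_t)\bigr] - \|x_{t+1/2}-x_t\|^2 - \|x_{t+1}-x_{t+1/2}\|^2.$$
The two negative quadratic terms are the resources I plan to spend to compensate for the function values being evaluated at the intermediate iterates $x_{t+1/2}$ and $x_{t+1}$ rather than at $x_t$.

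The main step is to shift both function evaluations back to $x_t$, spending one negative quadratic term per shift. For the $g$-term, the assumption $\|\nabla g(\cdot,\xi_t)\|\le B_g$ makes $g(\cdot,\xi_t)$ $B_g$-Lipschitz, so $g(x_{t+1},\xi_t)\ge g(x_{t+1/2},\xi_t)-B_g\|x_{t+1}-x_{t+1/2}\|$, and Young's inequality $2\gamma_t B_g\|x_{t+1}-x_{t+1/2}\|\le \gamma_t^2 B_g^2 + \|x_{t+1}-x_{t+1/2}\|^2$ will absorb the $-\|x_{t+1}-x_{t+1/2}\|^2$ term and leave a clean $\gamma_t^2 B_g^2$. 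After this step the function gap reduces to $F(x_{t+1/2},\xi_t)-F(\overline{x}_t,\xi_t)$. To replace $x_{t+1/2}$ by $x_t$, I will invoke the subgradient inequality for $F(\cdot,\xi_t)$ at $x_t$, namely $F(x_{t+1/2},\xi_t) \ge F(x_t,\xi_t) + \langle \nabla F(x_t,\xi_t),\, x_{t+1/2}-x_t\rangle$ with $\nabla F(x_t,\xi_t) = \nabla f(x_t,\xi_t) + \nabla g(x_t,\xi_t)$, and a second Young's inequality $2\gamma_t|\langle\nabla F(x_t,\xi_t),x_{t+1/2}-x_t\rangle|\le \gamma_t^2\|\nabla F(x_t,\xi_t)\|^2 + \|x_{t+1/2}-x_t\|^2$ will cancel the remaining $-\|x_{t+1/2}-x_t\|^2$ and leave a surplus $\gamma_t^2\|\nabla F(x_t,\xi_t)\|^2$.

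Finally, I would take conditional expectation with respect to $\xi_t$ (independent of the history determining $x_t$), turning the function gaps into $F(x_t)-F(\overline{x}_t)$, and invoke Assumption \ref{assumption1} to bound $\E_{\xi_t}[\|\nabla F(x_t,\xi_t)\|^2]\le B^2 + L(F(x_t)-F(\overline{x}_t))$. Combining the two contributions to the coefficient of $F(x_t)-F(\overline{x}_t)$ should produce $-\gamma_t(2-\gamma_t L)$, the noise terms should combine to $\gamma_t^2(B^2+B_g^2)$, and a final full expectation then gives the stated inequality. The most delicate part will be the bookkeeping of the two Young's inequalities: each of the two available negative quadratic terms must be consumed by exactly one correction, and any asymmetric split (e.g.\ $2ab\le 2a^2+\tfrac12 b^2$) would either produce an extra multiplicative factor in front of $B_g^2$ or leave a residual $\|x_{t+1/2}-x_t\|^2$ that cannot be closed using Assumption \ref{assumption1} alone; the symmetric choice $2ab\le a^2+b^2$ in both places is what makes the constants match the claim exactly.
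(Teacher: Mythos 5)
Your proposal is correct and follows essentially the same route as the paper: both apply the Moreau smoothing inequality \eqref{rel1_prox} once for $f(\cdot,\xi_t)$ and once for $g(\cdot,\xi_t)$, sum, and then spend the two negative quadratic terms $\|x_{t+1/2}-x_t\|^2$ and $\|x_{t+1}-x_{t+1/2}\|^2$ via the symmetric Young's inequality to produce exactly $\gamma_t^2\|\nabla F(x_t,\xi_t)\|^2$ and $\gamma_t^2 B_g^2$ before invoking Assumption \ref{assumption1}. The only cosmetic difference is that you shift $g(x_{t+1},\xi_t)$ to $g(x_{t+1/2},\xi_t)$ via Lipschitzness and then shift $F$ at once using $\nabla F(x_t,\xi_t)$, whereas the paper uses the subgradient inequality for $g$ at $x_t$ with the displacement split as $(x_{t+1/2}-x_t)+(x_{t+1}-x_{t+1/2})$; the bookkeeping and constants are identical.
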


\begin{proof}
Using \eqref{rel1_prox} for the function $h=f(\cdot,\xi_t)$ with $y
= \overline{x}_{t}$ and $x = x_t$ we get:
\[  f(\overline{x}_{t},\xi_t) + \frac{1}{2 \gamma_t} \|x_t - \overline{x}_{t}\|^2 \geq
f(x_{t+1/2},\xi_t) + \frac{1}{2 \gamma_t} \|x_{t+1/2} - x_t\|^2 +
\frac{1}{2 \gamma_t} \|x_{t+1/2} - \overline{x}_{t}\|^2. \] Using
again \eqref{rel1_prox} for  function $h=g(\cdot,\xi_t)$ with $y = \overline{x}_{t}$
and $x = x_{t+1/2}$ we get:
\[  g(\overline{x}_{t},\xi_t) + \frac{1}{2 \gamma_t} \|x_{t+1/2} - \overline{x}_{t}\|^2 \geq
g(x_{t+1},\xi_t) + \frac{1}{2 \gamma_t} \|x_{t+1} - x_{t+1/2}\|^2 +
\frac{1}{2 \gamma_t} \|x_{t+1} - \overline{x}_{t}\|^2. \] Adding the
previous two inequalities  and using the convexity of $f(\cdot,\xi_t)$
and $g(\cdot,\xi_t)$, we get:
\begin{align*}
& f(\overline{x}_{t},\xi_t) +  g(\overline{x}_{t},\xi_t) + \frac{1}{2
\gamma_t} \|x_t - \overline{x}_{t}\|^2 -
\frac{1}{2 \gamma_t} \|x_{t+1} - \overline{x}_{t}\|^2\\
& \geq f(x_{t+1/2},\xi_t) + g(x_{t+1},\xi_t) + \frac{1}{2 \gamma_t}
\|x_{t+1/2} - x_t\|^2 + \frac{1}{2 \gamma_t} \|x_{t+1} -
x_{t+1/2}\|^2
\\
& \geq  f(x_t,\xi_t) + g(x_t,\xi_t)  +  \langle \nabla f(x_t,\xi_t) +
\nabla g(x_t,\xi_t) , x_{t+1/2} - x_t \rangle 
\\
&\qquad + \frac{1}{2 \gamma_t} \|x_{t+1/2} -
x_t\|^2 + \langle \nabla g(x_t,\xi_t), x_{t+1}
- x_{t+1/2}  \rangle + \frac{1}{2 \gamma_t} \|x_{t+1} -
x_{t+1/2}\|^2\\
& \geq f(x_t,\xi_t) + g(x_t,\xi_t)  -\frac{\gamma_t}{2} \left( \| \nabla
f(x_t,\xi_t) + \nabla g(x_t,\xi_t) \|^2 + \|\nabla g(x_t,\xi_t)\|^2 \right),
\end{align*}
where in the last inequality we used that $\langle \alpha,z \rangle
 + \frac{1}{2\gamma} \|z\|^2 \geq - \frac{\gamma}{2} \|\alpha\|^2$ for all
$z$. Taking now  expectation and using that $\|x_{t+1} -
\overline{x}_{t}\|^2 \geq \|x_{t+1} - \overline{x}_{t+1}\|^2$ we get:
\begin{align*}
& \E_{\xi_{[t]}} [\|x_{t+1} - \overline{x}_{t+1} \|^2] \leq  \E_{\xi_{[t-1]}} [\| x_t - \overline{x}_{t} \|^2   - 2 \gamma_t  \E_{\xi_{[t-1]}}[ F(x_t) - F(\overline{x}_{t})]  \\ 
& \quad + \gamma_t^2 \E_{\xi_{[t]}} [ \| \nabla
F(x_t,\xi_t)   \|^2 + \| \nabla g(x_t,\xi_t) \|^2 ],
\end{align*}
where we used basic properties of the conditional expectation as in the proof of Theorem \ref{th:spg_basic}. Further, using the stochastic bounded gradient condition \eqref{as:main1_spg}  and  omitting the dependence  of expectation on $\xi_{[t]}$, we get  the statement of the theorem.  \qed
\end{proof}

%%%%%%%%%%%%%%%%%%%%%%%%%%%%%%%%%%%%%%%%%%%
\vspace{-0.4cm}

\subsection{Convergence rates for stochastic first order  methods}
\vspace{-0.1cm}
\noindent Let us denote $R_t^2 = \E_{\xi_{[t-1]}} [\|x_t -
\overline{x}_{t}\|^2]$. It follows that $R_0= \|x_0 -
\overline{x}_{0}\|^2$.  From Theorem \ref{th:spg_basic} for SPG and Theorem \ref{th:spp_basic} for SPP, we can derive various convergence rates for the stochastic first order (SFO) methods (i.e. SPG, SPP) depending on  the values taken by the constants $B, L$ and $\mu$ and also depending on the choice of the stepsize: constant or variable. Our convergence results recover, complement or extend the previous convergence rates for stochastic gradient descent (SGD) to more general  functions $f(\cdot,\xi)$  and  $g(\cdot,\xi)$.  To derive the convergence rates for the two SFO algorithms derived in the previous sections,  first we can notice that the descent relations from  Theorem \ref{th:spg_basic}  and Theorem \ref{th:spp_basic} can be written compactly as:
\begin{align}
\label{sfo_basic} 
& \Exp{\|x_{t+1} - \overline{x}_{t+1} \|^2} \leq   \Exp{\| x_t - \overline{x}_{t} \|^2}   - \gamma_t (2 - \gamma_t L) \Exp{ F(x_t) - F(\overline{x}_{t})}  + \gamma_t^2 {\cal B}^2, 
\end{align}
where 
\begin{equation*}
{\cal B}^2  \leq \left\{ \begin{array}{lr}
B^2 & \text{if}  \quad \;  \text{SPG} \\
B^2+B_g^2  & \;\; \text{if}  \quad \text{SPP}.
\end{array}\right.
\end{equation*}
Based on this common descent property  of our stochastic first order methods, for  the convex case (i.e. $\mu=0$) and constant stepsize we get the following convergence result (we make the convention $2/0 =\infty$):

\begin{theorem}[Sublinear convergence with constant stepsize]
\label{th:spg_mu0}
Let assumptions of Theorem \ref{th:spg_basic} for SPG and  Theorem \ref{th:spp_basic} for SPP  hold. Additionally, we assume constant stepsizes $\gamma_t \equiv \gamma \in (0,
2/L)$. Then, we  get the following relation for the average of
iterates $\hat{x}_t = \frac{1}{t} \sum_{j=0}^{t-1}x_j$  of SFO methods (i.e. SPG, SPP)  in the expected value function gap:
\begin{equation}
\label{eq-sfb-rate0}
\Exp{ F(\hat{x}_t) - F^*} \leq \frac{R_0^2}{ t \gamma(2 - \gamma L)}
+ \frac{\gamma {\cal B}^2}{(2-\gamma L)} \qquad \forall t \geq 1.
\end{equation}
Consequently, let $T$  the number of iterations of SFO be fixed such that $T {\cal B}^2 > R_{0}^2 L^2$ and $\gamma_t \equiv  \gamma = R_0/\sqrt{T {\cal B}^2}$, then we obtain ${\cal O}(1/\sqrt{T})$ convergence rate:
\[ \Exp{ F(\hat{x}_T) - F^*} \leq
\frac{2 R_0  {\cal B}  }{\sqrt{T}}.
\] 
If $ {\cal B}=0$ and $\gamma_t \equiv \gamma \in (0,
2/L)$, then we obtain ${\cal O}(1/T)$ convergence rate:
\begin{equation*}
\Exp{ F(\hat{x}_T) - F^*} \leq \frac{R_0^2}{ T \gamma(2 - \gamma L)}.
\end{equation*}
\end{theorem}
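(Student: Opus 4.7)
The plan is to leverage the unified descent inequality \eqref{sfo_basic} (which covers both SPG and SPP under the hypotheses of Theorems \ref{th:spg_basic} and \ref{th:spp_basic}), telescope it over a horizon of $T$ iterations under a constant stepsize, and then convert the resulting bound on the sum of function value gaps into a bound on $F(\hat{x}_T)$ via Jensen's inequality, exploiting the convexity of $F$ and the definition $\hat{x}_T = \frac{1}{T}\sum_{j=0}^{T-1} x_j$.

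Concretely, first I would set $\gamma_t \equiv \gamma \in (0, 2/L)$ in \eqref{sfo_basic}. Since $\gamma(2-\gamma L) > 0$, rearranging yields
\[
\gamma(2-\gamma L)\, \Exp{F(x_t) - F^*} \leq R_t^2 - R_{t+1}^2 + \gamma^2 {\cal B}^2,
\]
where I use that $F(\overline{x}_t) = F^*$ by definition of the projection on $X^*$. Summing from $t=0$ to $T-1$ telescopes the right-hand side to $R_0^2 - R_T^2 + T \gamma^2 {\cal B}^2 \leq R_0^2 + T \gamma^2 {\cal B}^2$, giving
\[
\gamma(2-\gamma L) \sum_{t=0}^{T-1} \Exp{F(x_t) - F^*} \leq R_0^2 + T \gamma^2 {\cal B}^2.
\]
Dividing by $T\gamma(2-\gamma L)$ and applying Jensen's inequality $\Exp{F(\hat{x}_T) - F^*} \leq \frac{1}{T}\sum_{t=0}^{T-1}\Exp{F(x_t) - F^*}$ delivers \eqref{eq-sfb-rate0} verbatim.

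For the second claim, I would plug the prescribed $\gamma = R_0 / \sqrt{T {\cal B}^2}$ into \eqref{eq-sfb-rate0}. The assumption $T{\cal B}^2 > R_0^2 L^2$ gives $\gamma L < 1$, so in particular $\gamma \in (0, 2/L)$ is admissible and $2 - \gamma L > 1$. A direct computation shows the two terms on the right of \eqref{eq-sfb-rate0} become equal:
\[
\frac{R_0^2}{T\gamma(2-\gamma L)} = \frac{R_0 {\cal B}}{\sqrt{T}(2-\gamma L)}, \qquad \frac{\gamma {\cal B}^2}{2-\gamma L} = \frac{R_0 {\cal B}}{\sqrt{T}(2-\gamma L)},
\]
and summing and using $2 - \gamma L > 1$ yields $\Exp{F(\hat{x}_T) - F^*} \leq 2 R_0 {\cal B}/\sqrt{T}$. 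For the final claim, setting ${\cal B} = 0$ in \eqref{eq-sfb-rate0} immediately collapses the bound to the stated ${\cal O}(1/T)$ rate, as the second term vanishes.

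The argument is essentially bookkeeping; the only subtle point is verifying that the stepsize choice in the second claim is feasible (i.e.\ $\gamma < 2/L$) and that the constant $2 - \gamma L$ is at least $1$, so that the two equal-valued terms can be combined into the clean $2R_0 {\cal B}/\sqrt{T}$ bound. The condition $T{\cal B}^2 > R_0^2 L^2$ is precisely what makes this work, so I expect no real obstacle beyond this sanity check.
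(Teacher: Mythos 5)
Your proof is correct and follows essentially the same route as the paper: telescoping the unified descent inequality \eqref{sfo_basic} with constant stepsize, applying Jensen's inequality to the average iterate, and then specializing the stepsize using $2-\gamma L\geq 1$ under $T{\cal B}^2>R_0^2L^2$. No gaps.
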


\begin{proof}
Adding the inequality \eqref{sfo_basic}  from $j=0$ to $j=t-1$ for
$\gamma_t=\gamma$, we get:
\begin{align*}
\gamma (2 - \gamma L) \sum_{j=0}^{t-1} \E \left[ F(x_j) -
F(\overline{x}_{j})\right] \leq \E [\|x_{0} - \overline{x}_{0}\|^2]
- \E[\|x_{t} - \overline{x}_{t}\|^2] + t \gamma^2 {\cal B}^2.
\end{align*}
Using Jensen's inequality for the convex  function $F$ and $\gamma \in (0, 2/L)$, we get:
\begin{align}
\label{eq-sfb-rate00}
\gamma (2 - \gamma L) t \E \left[ F(\hat{x}_t) - F^* \right]& \leq
\gamma (2 - \gamma L) t \E \left[ \sum_{j=0}^{t-1} \frac{1}{t}
F(x_j) - F^* \right] \nonumber\\
 &\leq \|x_{0} - \overline{x}_{0}\|^2 + \gamma^2
t{\cal B}^2,
\end{align}
which leads immediately to our first statement.  Fixing the number of iteration $T$ and minimizing the right hand side in \eqref{eq-sfb-rate0} w.r.t. $\gamma$ we obtain the best stepsize of the form ${\cal O}(1/\sqrt{T})$ and the corresponding convergence  rate ${\cal O}(1/\sqrt{T})$. However, the resulting expressions are cumbersome.  On the other hand, if
$T {\cal B}^2 > R_{0}^2 L^2$ and consider  the stepsize
$\gamma =   R_0/\sqrt{T {\cal B}^2}$ we get a simpler expression for the convergence rate. Indeed,  for these choices we have that $2 - \gamma L \geq 1$ and using this  in the  inequality \eqref{eq-sfb-rate00}, we obtain $ \Exp{ F(\hat{x}_T) - F^*} \leq \frac{2 R_0 {\cal B}}{ \sqrt{T}}$.  Convergence rate ${\cal O}(1/T)$ for  ${\cal B} =0$ follows immediately from \eqref{eq-sfb-rate00}.  \qed
\end{proof}

\begin{theorem}[Sublinear convergence with variable stepsize]
\label{th:spg_mu0var}
Let assumptions of Theorem \ref{th:spg_basic} for SPG and  Theorem \ref{th:spp_basic} for SPP  hold. Additionally, we assume $L >0$, variable stepsize $\gamma_t = \gamma_0/\sqrt{t}$ with $\gamma_0 = 1/L$  and the distance of the iterates to the optimal set is bounded almost surely, i.e. $\Exp{\| x_t - \overline{x}_{t} \|^2}  \leq R^2$ for all $t \geq 0$. Then, we  get the following  rate for the average of
iterates $\hat{x}_t = \frac{1}{t} \sum_{j=0}^{t-1}x_j$  of SFO in the
expected value function gap:
\begin{equation}
\label{eq-sfb-rate0var}
\Exp{ F(\hat{x}_t) - F^*} \leq   \frac{1}{ \sqrt{t}} (R^2 L +  \frac{2 {\cal B}^2}{L})  \qquad \forall t \geq 1.
\end{equation}
\end{theorem}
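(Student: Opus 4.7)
The plan is to combine the descent inequality \eqref{sfo_basic}, which holds uniformly for both SPG and SPP under the theorem's hypotheses, with a weighted summation tuned to the stepsize schedule $\gamma_t = 1/(L\sqrt{t})$. For $t\geq 1$ we have $\gamma_t L = 1/\sqrt{t} \leq 1$, hence $2-\gamma_t L \geq 1$, so \eqref{sfo_basic} yields
\[ \gamma_j \Exp{F(x_j) - F^*} \leq \Exp{\|x_j - \overline{x}_j\|^2} - \Exp{\|x_{j+1} - \overline{x}_{j+1}\|^2} + \gamma_j^2 {\cal B}^2. \]
Dividing by $\gamma_j$ and summing over $j=1,\dots,t$ reduces the task to controlling two separate accumulated quantities.

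The key technical step will be Abel summation (summation by parts) applied to the variable-weight telescoping
\[ \sum_{j=1}^t \frac{a_j - a_{j+1}}{\gamma_j} = \frac{a_1}{\gamma_1} + \sum_{j=2}^t a_j\left(\frac{1}{\gamma_j} - \frac{1}{\gamma_{j-1}}\right) - \frac{a_{t+1}}{\gamma_t}, \]
where $a_j = \Exp{\|x_j - \overline{x}_j\|^2}$. Since $\gamma_j$ is nonincreasing, every increment $1/\gamma_j - 1/\gamma_{j-1}$ is nonnegative; using the uniform bound $a_j \leq R^2$ and discarding the negative endpoint produces the clean collapse
\[ \sum_{j=1}^t \frac{a_j - a_{j+1}}{\gamma_j} \leq \frac{R^2}{\gamma_t} = R^2 L \sqrt{t}. \]
This is precisely where the almost-sure boundedness hypothesis is essential, and it is the main obstacle: without a uniform control on the iterates, the varying weights $1/\gamma_j$ cannot be absorbed into a telescoping.

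For the remaining accumulated term, an integral comparison gives $\sum_{j=1}^t 1/\sqrt{j} \leq 2\sqrt{t}$, so $\sum_{j=1}^t \gamma_j \leq 2\sqrt{t}/L$. Assembling the two estimates yields
\[ \sum_{j=1}^t \Exp{F(x_j) - F^*} \leq R^2 L \sqrt{t} + \frac{2 {\cal B}^2 \sqrt{t}}{L}. \]
Dividing by $t$ and applying Jensen's inequality to the convex function $F$ at the average $\hat{x}_t$ then produces the claimed bound $\Exp{F(\hat{x}_t) - F^*} \leq (R^2 L + 2{\cal B}^2/L)/\sqrt{t}$. A small bookkeeping adjustment handles the index $j=0$ where $\gamma_0$ is formally undefined: either start the sum at $j=1$ and fold the extra $j=0$ term into the leading $R^2 L$ constant, or equivalently reinterpret $\gamma_t$ as $1/(L\sqrt{t+1})$ throughout; neither alters the asymptotic rate.
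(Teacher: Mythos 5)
Your proposal is correct and follows essentially the same route as the paper's proof: both use $2-\gamma_t L\geq 1$ to reduce \eqref{sfo_basic} to a weighted telescoping, apply summation by parts with the uniform bound $\Exp{\|x_j-\overline{x}_j\|^2}\leq R^2$ to collapse the variable-weight sum to $R^2/\gamma_{t-1}$, bound $\sum_j \gamma_j\leq 2\sqrt{t}/L$, and finish with Jensen's inequality. The only cosmetic difference is your explicit naming of the Abel summation identity and your handling of the $j=0$ index, which the paper treats implicitly by taking $\gamma_0=1/L$.
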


\begin{proof}
Since $\gamma_t = \gamma_0/\sqrt{t}$ with $\gamma_0 = 1/L$ it follows that $\gamma_t  (2 - \gamma_t L) \geq \gamma_t$, which combined with \eqref{sfo_basic} yields: 
%\begin{align}
%\label{spg_basic_var} & \Exp{\|x_{t+1} - \overline{x}_{t+1} \|^2}  \leq
%\Exp{\| x_t - \overline{x}_{t} \|^2} -  \gamma_t  
%\Exp{ F(x_t) - F(\overline{x}_{t})} + \gamma_t^2 {\cal B}^2 
%\end{align}
%or equivalently
$$ \Exp{ F(x_t) - F^*} \leq \frac{1}{\gamma_t } \Exp{x_t - \overline{x}_{t} \|^2} -  \frac{1}{ \gamma_t}   \Exp{\| \|x_{t+1} - \overline{x}_{t+1}  \|^2} + \gamma_t {\cal B}^2  $$
Summing up from $j = 0$ to $t-1$ and using that $\gamma_t$ is a nonincreasing sequence, we have:
\begin{align*} 
& \sum_{j = 0}^{t-1}  \Exp{ F(x_j) - F^*}  \leq \frac{1}{\gamma_0} \|x_0 - \overline{x}_{0} \|^2 +  \sum_{j= 0}^{t-2} (\frac{1}{\gamma_{j+1}} - \frac{1}{\gamma_{j}}) \Exp{\| x_{j+1} - \overline{x}_{j+1}\|^2} \\ 
& \qquad -  \frac{1}{ \gamma_{t-1}}\Exp{\| x_{t} - \overline{x}_{t} \|^2} + {\cal B}^2  \sum_{j=0}^{t-1} \gamma_{j}  \\
& \leq  \frac{1}{\gamma_0} \|x_0 - \overline{x}_{0}  \|^2 + \sum_{j= 0}^{t-2} (\frac{1}{\gamma_{j+1}} - \frac{1}{\gamma_{j}}) \Exp{\| x_{j+1} - \overline{x}_{j+1}\|^2}  + {\cal B}^2  \sum_{j=0}^{t-1} \gamma_{j} \\
& \leq  \frac{R^2}{\gamma_0}  + \sum_{j= 0}^{t-2} (\frac{1}{\gamma_{j+1}} - \frac{1}{\gamma_{j}}) R^2  + {\cal B}^2  \sum_{j=0}^{t-1} \gamma_{j} =   \frac{R^2}{\gamma_0} + (\frac{1}{\gamma_{t-1}} - \frac{1}{\gamma_{0}}) R^2 + {\cal B}^2  \sum_{j=0}^{t-1} \gamma_{j} \\
& =  \frac{R^2}{\gamma_{t-1}} + {\cal B}^2  \sum_{j=0}^{t-1} \gamma_{j}.
\end{align*}
Using now the Jensen's inequality for the convex  function $F$, we get:
\[  \Exp{ F(\hat{x}_t) - F^*} \leq   \frac{R^2}{t \gamma_{t-1}} + \frac{{\cal B}^2}{t}  \sum_{j=0}^{t-1} \gamma_{j}  \leq \frac{R^2 L}{ \sqrt{t}} +  \frac{2 {\cal B}^2}{L \sqrt{t}},  \]
which proves our statement.  \qed
\end{proof}

\noindent If we assume additionally some quadratic functional growth condition (i.e. $\mu >0$) and run the two SFO methods with constant stepsize, the next theorem proves that  we can achieve linear convergence to a noise dominated region proportional to the stepsize and in some cases even pure linear convergence provided that ${\cal B}=0$.

\begin{theorem} [Sufficient conditions for linear convergence of SFO]
\label{th:spg_muL} Let assumptions of Theorem \ref{th:spg_basic} for SPG and  Theorem \ref{th:spp_basic} for SPP hold. Additionally, we  assume that the quadratic functional growth condition from  Assumption \ref{assumption2} holds  and we choose constant stepsize  $\gamma_t = \gamma \in (0,2/L)$. Then, we get the following upper bounds for the square distance of the SFO iterates to the solution set:
\begin{equation*}
\Exp{\|x_{t} - \overline{x}_{t} \|^2} \!\leq\!\! \left\{ \!\begin{array}{lr}
\left| 1 - \mu \gamma + \frac{\mu L \gamma^2}{2} \right |^t  \!\!R_0^2 +\!
\frac{{\cal B}^2 \gamma^2}{1- |1-\mu \gamma  + \mu L \gamma^2/2|}  \!\!\!\!\!\!\!\! & \text{if}  \; 
1 \!-\! \mu \gamma + \frac{\mu L \gamma^2}{2} \!>\! -1 \\
{\cal B}^2 \gamma^2  & \text{if}  \quad  1 - \mu \gamma + \frac{\mu L
\gamma^2}{2} \leq -1.
\end{array}\right.
\end{equation*}
Moreover, if the condition $\mu < 4L$ holds or  the constant stepsize satisfies $\gamma < \min\left({2}/{L}, {2}/{\mu} \right)$, then  $|  1 - \mu \gamma + \frac{\mu L\gamma^2}{2}| < 1$ and therefore we get linear converge to a noise dominated region with radius  proportional to the square of the stepsize $\gamma$:
\[ \Exp{\|x_{t} - \overline{x}_{t} \|^2} \leq  \left| 1 - \mu \gamma + \frac{\mu L \gamma^2}{2} \right|^t  R_0^2 +
\frac{{\cal B}^2 \gamma^2}{1- |1-\mu \gamma  + \mu L \gamma^2/2|}. \]
Consequently, if ${\cal B}=0$, then we get pure linear convergence:
\begin{equation*}
\Exp{\|x_{t} - \overline{x}_{t} \|^2} \leq \left| 1 - \mu \gamma + \frac{\mu L \gamma^2}{2} \right |^t  R_0^2.
\end{equation*}
\end{theorem}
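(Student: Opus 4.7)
The plan is to combine the common descent inequality \eqref{sfo_basic} with the quadratic functional growth condition \eqref{as:strong_spg} to turn it into a one-step contraction, and then iterate. Since $\gamma \in (0,2/L)$, the coefficient $\gamma(2-\gamma L)$ is strictly positive, so we may lower bound $\Exp{F(x_t)-F(\overline{x}_t)}$ via Assumption~\ref{assumption2} (taking expectation of the pointwise inequality) to obtain
\[
R_{t+1}^2 \;\leq\; q\, R_t^2 \;+\; \gamma^2 {\cal B}^2, \qquad q \;:=\; 1 - \mu\gamma + \tfrac{\mu L\gamma^2}{2}.
\]
Writing $q = 1 - \mu\gamma(1 - L\gamma/2)$ immediately gives $q<1$ for any $\gamma \in (0,2/L)$, so the only issue for a linear contraction is whether $q>-1$.

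I would then split on the two regimes stated. In the first regime $q>-1$, hence $|q|<1$, the recurrence can be iterated by a simple induction: using $q \leq |q|$ together with $R_t^2 \geq 0$ yields $R_{t+1}^2 \leq |q|R_t^2 + \gamma^2{\cal B}^2$, and unrolling gives
\[
R_t^2 \;\leq\; |q|^t R_0^2 \;+\; \gamma^2{\cal B}^2 \sum_{j=0}^{t-1} |q|^j \;\leq\; |q|^t R_0^2 \;+\; \frac{\gamma^2{\cal B}^2}{1-|q|},
\]
which is the first branch of the claim. In the second regime $q\leq -1$, the term $qR_t^2$ is nonpositive, so the recurrence collapses to $R_{t+1}^2 \leq \gamma^2 {\cal B}^2$ for every $t\geq 0$, giving the second branch.

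For the ``Moreover'' part I would just verify that each of the stated sufficient conditions forces $q > -1$. Rewriting $q+1 = 2 - \mu\gamma(1-L\gamma/2)$, the quantity $\mu\gamma(1-L\gamma/2)$ is maximized over $\gamma\in(0,2/L)$ at $\gamma=1/L$ with value $\mu/(2L)$, so $\mu<4L$ already ensures $q+1>0$ uniformly in $\gamma$; alternatively, $\gamma<2/\mu$ yields $\mu\gamma(1-L\gamma/2)<\mu\gamma<2$ directly. In either case $|q|<1$ and the first branch applies, giving linear convergence of $R_t^2$ to the noise floor $\gamma^2{\cal B}^2/(1-|q|)$. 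The pure linear rate when ${\cal B}=0$ is then immediate from this same inequality.

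The subtle point, and the only place where care is needed, is the sign of $q$: one must not lose linear decay in the branch $-1<q<0$, which is precisely why the bound is expressed with $|q|^t$ rather than $q^t$. The inequality $q\leq |q|$ combined with $R_t^2\geq 0$ handles this cleanly, and all remaining manipulations are routine geometric-series computations.
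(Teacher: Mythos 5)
Your proposal is correct and follows essentially the same route as the paper: combine the common descent inequality with the quadratic growth condition to get the one-step recursion $R_{t+1}^2 \leq q R_t^2 + \gamma^2 {\cal B}^2$ with $q = 1 - \mu\gamma + \mu L \gamma^2/2$, then split on $q>-1$ versus $q\leq -1$ and unroll with a geometric series, verifying that $\mu<4L$ or $\gamma<\min(2/L,2/\mu)$ forces $|q|<1$. Your explicit maximization of $\mu\gamma(1-L\gamma/2)$ at $\gamma=1/L$ is just a cleaner justification of the paper's own claim that $\mu<4L$ suffices regardless of $\gamma$.
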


\begin{proof}
From \eqref{sfo_basic}  and Assumption \ref{assumption2}  it follows that:
\begin{align*}
& \E_{\xi_{[t]}} [\|x_{t+1}  - \overline{x}_{t+1}\|^2] \leq
\E_{\xi_{[t-1]}}[\|x_{t} - \overline{x}_{t}\|^2]  -  \gamma_t (2 -
\gamma_t L) \E_{\xi_{[t-1]}}
\left[F(x_t) - F(\overline{x}_{t})\right]  \\
& + {\cal B}^2 \gamma_{t}^2  \overset{\eqref{as:strong_spg}}{\leq} \E_{\xi_{[t-1]}}[\|x_{t} -
\overline{x}_{t}\|^2]  - \frac{\mu \gamma_t (2 - \gamma_t L)}{2}
\E_{\xi_{[t-1]}} [\|x_{t} - \overline{x}_{t}\|^2] +  {\cal B}^2
\gamma_{t}^2,
\end{align*}
provided that $\gamma_t \leq 2/L$.  In conclusion, we obtain the
following recursion:
\begin{align}
\label{recursive1_spg} \E_{\xi_{[t]}} [\|x_{t+1} -
\overline{x}_{t+1}\|^2] & \leq   \left(1 - \mu \gamma_t + \frac{\mu
L \gamma_t^2}{2} \right) \E_{\xi_{[t-1]}} [\|x_{t} -
\overline{x}_{t}\|^2] +  {\cal B}^2 \gamma_{t}^2.
\end{align}
For constant stepsize $\gamma_t = \gamma < 2/L$ it follows that $ 1 - \mu \gamma
+ \mu L \gamma^2/2 < 1$.  Then, combining inequality \eqref{recursive1_spg} with
condition $1 - \mu \gamma + \mu L
\gamma^2/2 > -1$, we get:
\begin{align*}
\E [\|x_{t} - \overline{x}_{t}\|^2] & \leq   |1 - \mu \gamma +
\frac{\mu L \gamma^2}{2} |^t \|x_{0} - \overline{x}_{0}\|^2 +
{\cal B}^2 \gamma^2 \sum_{j=0}^{t-1} |1 - \mu \gamma +
\frac{\mu L \gamma^2}{2}|^j \\
& \leq |1 - \mu \gamma + \frac{\mu L \gamma^2}{2} |^t
\|x_{0} - \overline{x}_{0}\|^2 +    \frac{{\cal B}^2 \gamma^2}{ 1- |1-\mu \gamma
+ \mu L \gamma^2/2|},
\end{align*}
where in the second inequality we used that $\sum_{j=0}^{t-1} c^j
\leq \sum_{j=0}^{\infty} c^j = \frac{1}{1-c}$ for any $|c|<1$.
Otherwise, if $1 - \mu \gamma + \mu L \gamma^2/2 \leq -1$, then from
\eqref{recursive1_spg} it follows that:
\begin{align*} \E [\|x_{t} - \overline{x}_{t}\|^2] &
\leq {\cal B}^2 \gamma^2.
\end{align*}
Moreover, if $\mu < 4L$, then the relation $1 - \mu \gamma + \mu L
\gamma^2/2 > -1$ always holds, regardless of the value of $\gamma$. Thus, if $\mu < 4L$ and  the constant stepsize is chosen such $\gamma < \frac{2}{L}$,  then $|1 - \mu \gamma + \mu L \gamma^2| < 1$. Similarly, if  the  constant stepsize satisfies  $\gamma < \min(2/L, 2/\mu)$, then again we have $0< 1 - \mu \gamma + \mu L \gamma^2 < 1$. Therefore, these two previous conditions guarantee linear convergence  to a noise dominated region whose radius is proportional to the square of the stepsize $\gamma$.    Finally, the inequality \eqref{recursive1_spg} with ${\cal B}=0$ and $|1 - \mu \gamma + \mu L \gamma^2| < 1$  proves  linear convergence for $\Exp{\|x_{t} - \overline{x}_{t} \|^2}$, with $x_t$ generated by SFO methods.  \qed
\end{proof}

\begin{remark}
\noindent Note that if $f$ is strongly convex and with Lipschitz continuous gradient,   then the condition $\mu < 4L$  holds. Indeed, assume that the $\mu$ strongly convex function  $f(\cdot,\xi)$ has $L_f$ Lipschitz continuous gradient and the regularization function $g$ has bounded subgradients. Then, we have:
\[  \|\nabla f(x, \xi) - \nabla f(x^*,\xi)\| \leq L_f \|x - x^*\| \quad \text{and} \quad
\|\nabla g(x, \xi) (x)\| \leq B_g \quad \forall x \in \textnormal{dom} \ F, \]
where $x^*$ is the unique solution of \eqref{eq_problem}.   In this case it follows that:
\begin{align*}
& \|\nabla f(x,\xi) + \nabla g(x, \xi)  \|  \leq  \|\nabla f(x,\xi) - \nabla
f(x^*, \xi) \| + \|\nabla f(x^*,\xi) + \nabla g(x, \xi)   \| \\
& \!\leq\! L_f \| x - x^*\| + \|\nabla f(x^*,\xi)\|
+ \| \nabla g(x, \xi)   \|  \!\leq\!   L_f \| x - x^*\| \!+\! (\|\nabla f(x^*,\xi)\| + B_g).
\end{align*}
Now, since $f$ is strongly convex, it follows that $F(x) - F^* \geq \frac{\mu}{2} \| x - x^*\|^2$, which inserted in the previous inequality, yields: 
\begin{align*}
\|\nabla f(x,\xi) + \nabla g(x, \xi)  \| & \leq  L_f \left(\frac{2(F(x) -
F^*)}{\mu} \right)^{1/2} + (\|\nabla f(x^*,\xi)\| + B_g).
\end{align*}
Squaring the fist and the last term in the previous relation and taking expectation  we get  that the stochastic bounded gradient condition  \eqref{as:main1_spg} holds with:
 $ L= \frac{4 L_f^2}{\mu}  >  \frac{\mu}{4}$, 
since we  always have the relation $L_f \geq \mu$  between the Lipschitz and strong convexity constants of a convex function.  \qed
\end{remark}

\noindent Theorem \ref{th:spg_muL} states that SFO methods achieve linear convergence when $B=0$ in \eqref{as:main1_spg} and in this case we say that the objective function $F$ satisfies a \textit{strong} stochastic bounded gradient condition.   Next theorem derives also  necessary conditions to achieve linear convergence for SFO methods with constant stepsize. Paper  \cite{NecNes:15} proves that the class of  objective functions having Lipschitz continous gradient and satisfying a  quadratic functional growth is the largest one for which deterministic gradient method converges linearly.  Using a similar reasoning as in  \cite{NecNes:15},  we expect  additional necessary conditions for linear convergence of SFO methods as described in the next result.   

\begin{theorem}  [Necessary conditions for linear convergence of SFO]
\label{th_spg_nec}
Assume  that $g \equiv 0$ and  $f$ satisfies a quadratic functional growth condition (Assumption \ref{assumption2}) and has Lipschitz continuous gradient.  Assume further that there exists $q < 1$ such that the SFO iterates with constant stepsize $\gamma>0$  converge linearly, i.e. $\E_{\xi_t}[\|x_{t+1} - \overline{x}_{t+1}\|^2 | \xi_{[t-1]}] \leq q \|x_{t} - \overline{x}_{t}\|^2$ for all $t \geq 0$,  and the stochastic process $(x_t)_{t \geq 0}$ of SFO yields the same  projection onto the optimal set, i.e. $\overline{x}_{t}  = \overline{x}_{0}$ for all $t \geq 0$ almost surely. Then, the stochastic bounded  gradient condition from  Assumption \eqref{assumption1} holds with $B \equiv 0$, or equivalently, all the partial functions $f(\cdot,\xi)$ must have the same minimizer.
\end{theorem}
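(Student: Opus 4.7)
The key observation is that the descent inequalities used to prove Theorem \ref{th:spg_basic} and Theorem \ref{th:spp_basic} served as upper bounds, whereas in the present converse setting one needs the \emph{exact} one-step identity of the iteration and must invert the linear contraction hypothesis against it. I will focus on SPG; the SPP variant is flagged as the main obstacle below. First I would set $x^* = \overline{x}_0$, which by hypothesis equals $\overline{x}_t$ almost surely for every $t$. Specialising the SPG update \eqref{eq_spgm} with $g \equiv 0$ and expanding the square gives the exact identity
\begin{equation*}
\|x_{t+1} - x^*\|^2 = \|x_t - x^*\|^2 - 2\gamma \langle \nabla f(x_t,\xi_t), x_t - x^* \rangle + \gamma^2 \|\nabla f(x_t,\xi_t)\|^2.
\end{equation*}
Taking conditional expectation with respect to $\xi_{[t-1]}$, using that $\E[\nabla f(x_t,\xi_t)\mid\xi_{[t-1]}] = \nabla f(x_t)$ since $x_t$ is $\xi_{[t-1]}$-measurable, inserting the linear contraction hypothesis and rearranging, one isolates
\begin{equation*}
\gamma^2\, \E_{\xi_t}[\|\nabla f(x_t,\xi_t)\|^2 \mid \xi_{[t-1]}] \leq 2\gamma \langle \nabla f(x_t), x_t - x^* \rangle - (1-q)\|x_t - x^*\|^2.
\end{equation*}

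Next I would close the right-hand side using the two structural hypotheses on $f$. Since $\nabla f(x^*) = 0$ (because $g\equiv 0$) and $\nabla f$ is $L_f$-Lipschitz, Cauchy--Schwarz gives $\langle \nabla f(x_t), x_t - x^* \rangle \leq L_f \|x_t - x^*\|^2$, while quadratic functional growth from Assumption \ref{assumption2} yields $\|x_t - x^*\|^2 \leq (2/\mu)(f(x_t) - f^*)$. Chaining these produces $\E_{\xi_t}[\|\nabla f(x_t,\xi_t)\|^2] \leq (4L_f/(\gamma\mu))(f(x_t) - f^*)$. Because the contraction hypothesis at $t=0$ must hold from every starting point, the same inequality is valid at any $x \in \textnormal{dom} F$, which is exactly Assumption \ref{assumption1} with $B = 0$ and $L = 4L_f/(\gamma\mu)$, i.e.\ the strong stochastic bounded gradient condition.

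For the equivalence with a common minimiser I would evaluate the strong SBG at $x = x^*$: the right-hand side vanishes, forcing $\nabla f(x^*,\xi) = 0$ almost surely, so $x^*$ minimises each realisation $f(\cdot,\xi)$. Conversely, if every $f(\cdot,\xi)$ shares the minimiser $x^*$, the smoothness inequality $f(x,\xi) - f(x^*,\xi) \geq (1/(2L_f))\|\nabla f(x,\xi)\|^2$ integrated in $\xi$ recovers the strong SBG. The main obstacle I foresee is adapting the argument to SPP: its update $x_{t+1} = x_t - \gamma \nabla f(x_{t+1},\xi_t)$ produces an analogous identity in which the cross term is evaluated at $x_{t+1}$ rather than $x_t$, and this term does not collapse under conditional expectation to $\nabla f$ at a fixed deterministic point. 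Handling it would likely require using the prox optimality condition together with smoothness and quadratic growth applied at $x_{t+1}$, and a further comparison between $\|x_{t+1} - x^*\|$ and $\|x_t - x^*\|$ supplied by the contraction itself.
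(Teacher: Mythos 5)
Your SPG argument is correct, and it takes a genuinely different route from the paper's. You expand the one-step update exactly, take conditional expectation so that the cross term collapses to $\langle \nabla f(x_t), x_t - x^*\rangle$ by unbiasedness, invert the contraction hypothesis, and then close with Cauchy--Schwarz, $\nabla f(x^*)=0$, Lipschitz continuity and the growth condition \eqref{as:strong_spg}; this gives $\E_{\xi}[\|\nabla f(x,\xi)\|^2]\leq \tfrac{4L_f}{\gamma\mu}\,(f(x)-f^*)$, i.e.\ \eqref{as:main1_spg} with $B=0$, and evaluating at $x=x^*$ forces $\nabla f(x^*,\xi)=0$ a.s. The paper never forms this inner product: it writes $\gamma\|\nabla f(x_t,\xi_t)\| = \|x_{t+1}-x_t\|$, bounds $\|x_{t+1}-x_t\|^2\leq 2\|x_{t+1}-\overline{x}_0\|^2+2\|x_t-\overline{x}_0\|^2$, and applies the contraction and \eqref{as:strong_spg} to obtain \eqref{rel1}, namely $\E_{\xi_t}[\|\nabla f(x_t,\xi_t)\|^2\,|\,\xi_{[t-1]}]\leq \tfrac{4(1+q)}{\mu\gamma^2}(f(x_t)-f^*)$, which it then plays against the Lipschitz-gradient bound \eqref{rel2} to kill the residual term $2\E_\xi[\|\nabla f(\overline{x}_0,\xi)\|^2]$. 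The constants differ (yours scales as $1/\gamma$, the paper's as $1/\gamma^2$), but that is immaterial since only $B=0$ is at stake. A real merit of your version is that it makes explicit the step the paper leaves implicit: the contraction must hold from an \emph{arbitrary} starting point for the inequality to be promoted from the iterates to all of $\text{dom}\, F$. The merit of the paper's cruder bound is that it only uses the step length, so it does not care at which point the stochastic gradient is evaluated.

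That robustness is precisely what closes the SPP case, which you leave open; since the theorem is stated for SFO (both SPG and SPP), this half is needed, and it does not require the machinery you anticipate. From the prox optimality condition $x_t-x_{t+1}=\gamma\nabla f(x_{t+1},\xi_t)$ one writes $\gamma\nabla f(x_t,\xi_t)=(x_t-x_{t+1})+\gamma\bigl(\nabla f(x_t,\xi_t)-\nabla f(x_{t+1},\xi_t)\bigr)$, so that $\gamma\|\nabla f(x_t,\xi_t)\|\leq (1+\gamma L_f)\|x_t-x_{t+1}\|$ by Lipschitz continuity of $\nabla f(\cdot,\xi)$; from here the same triangle inequality, contraction and growth steps as above yield $\gamma^2\E_{\xi_t}[\|\nabla f(x_t,\xi_t)\|^2\,|\,\xi_{[t-1]}]\leq \tfrac{4}{\mu}(1+\gamma L_f)^2(1+q)\,(f(x_t)-f^*)$ (the paper's \eqref{rel1spp}, up to a typo in its expansion of the square), and the conclusion follows exactly as in your SPG argument. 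So: your proof is complete and cleaner for SPG, but you should import the step-length bound to finish SPP rather than trying to adapt the exact identity.
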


\begin{proof}
Note that for $g=0$ the SPG iterates coincide with the SGD updates: $x_{t+1} = x_t - \gamma \nabla f(x_t, \xi_t)$. Then, using that the stochastic process $(x_t)_{t \geq 0}$ yields  the same  projection onto the optimal set, i.e. $\overline{x}_{t}  = \overline{x}_{0}$ for all $t \geq 0$, we get:
\begin{align*}
& \gamma^2 \|\nabla f(x_t, \xi_t)\|^2  = \| x_{t+1} - x_t \|^2 \\ 
& \leq  2 \|x_{t+1} - \overline{x}_{t} \|^2 + 2 \|x_{t} - \overline{x}_{t} \|^2 = \ 2 \|x_{t+1} - \overline{x}_{0} \|^2 + 2 \|x_{t} - \overline{x}_{0} \|^2.
\end{align*}
Taking expectation with respect to $\xi_{t}$ and using $\E_{\xi_t}[\|x_{t+1} - \overline{x}_{t+1}\|^2| \xi_{[t-1]}] \leq q \|x_{t} - \overline{x}_{t}\|^2$ and the quadratic functional growth condition \eqref{as:strong_spg} on $f$, we obtain:
\begin{align}
\label{rel1} 
\!\!  \E_{\xi_t}[ \|\nabla f(x_t,\xi_t) \|^2 | \xi_{[t-1]} ] \!\leq\!  \frac{2(1 \!+\! q)}{ \gamma^2} \|x_{t} \!-\! \overline{x}_{0}\|^2 \!\leq\! \frac{4(1 \!+\! q)}{\mu \gamma^2} (f(x_t) \!-\!  f(\overline{x}_{0})). 
\end{align}

\noindent Similarly, for SPP we have from the definition of the proximity operator:
$$
x_t - x_{t+1} = \gamma \nabla f(x_{t+1},\xi_t)= \gamma \nabla f(x_t,\xi_t) +\gamma \nabla f(x_{t+1},\xi_t)-\gamma \nabla f(x_t,\xi_t).
$$
Therefore,  from the $L_f$  Lipschitz continuity  of $\nabla f(\cdot,\xi)$, we get
\begin{align*}
& \gamma^2 \| \nabla f(x_t,\xi_t) \|^2  =  \|x_t- x_{t+1} \|^2 + 2\gamma \langle x_t - x_{t+1},  \nabla f(x_t,\xi_t)- \nabla f(x_{t+1},\xi_t) \rangle \\
& \quad +\gamma^2 \| \nabla f(x_{t+1},\xi_t) - \nabla f(x_t,\xi_t) \|^2 \leq (1 + 2 \gamma L_f + \gamma^2 L_f) \| x_t -x_{t+1}\|^2 \\
& \leq 2(1 + 2 \gamma L_f + \gamma^2 L_f)( \|x_{t+1} - \overline{x}_t\|^2 +\| x_t - \overline{x}_t\|^2.
\end{align*}
Now, taking expectation on the  both sides of above inequality, using that the stochastic process $(x_t)_{t \geq 0}$ produces the same  projection onto the optimal set, i.e. $\overline{x}_{t}  = \overline{x}_{0}$ for all $t \geq 0$, and  $\E_{\xi_t}[\|x_{t+1} - \overline{x}_{t+1} \|^2 | \xi_{[t-1]}] \leq q \|x_t -\overline{x}_t\|$, we get:
\begin{align}
\label{rel1spp}
& \gamma^2 \E_{\xi_t}[ \| \nabla f(x_t,\xi_t) \|^2 | \xi_{[t-1]}] \leq 2(1 + 2 \gamma L_f + \gamma^2 L_f)(1+q) \| x_t - \overline{x}_0\|^2  \nonumber \\
&\leq (4/\mu)(1 + 2 \gamma L_f + \gamma^2 L_f)(1+q) (f(x_t) - f(\overline{x}_0)),
\end{align}
where in  the last inequality we used the  quadratic functional growth condition \eqref{as:strong_spg}  on $f$.  On the other hand, if $f(\cdot,\xi)$ has $L_f$ Lipschitz continuous gradient,  then for any $x \in \text{dom} \ F$ we have: 
\begin{align*}
&\|\nabla f(x,\xi)  \|^2  \leq 2  \|\nabla f(x,\xi) - \nabla
f(\overline{x}_{0}, \xi) \|^2 + 2 \|\nabla f(\overline{x}_{0},\xi)  \|^2 \\
& \leq 2 L_f^2 \| x - \overline{x}_{0} \| ^2+ 2 \|\nabla f(\overline{x}_{0},\xi)\|^2  \leq   \frac{4 L_f^2}{\mu} (f(x) - f(\overline{x}_{0}))  + 2 \|\nabla f(\overline{x}_{0},\xi)\|^2, 
\end{align*} 
where in the last inequality we used again  the quadratic functional growth condition \eqref{as:strong_spg} on $f$.  Taking now expectation with respect to $\xi$, we obtain:
\begin{align}
\label{rel2}
 \E_{\xi}[ \|\nabla f(x,\xi) \|^2 \leq   \frac{4 L_f^2}{\mu} (f(x) - f(\overline{x}_{0}))   + 2  \E_{\xi}[ \|\nabla f(\overline{x}_{0},\xi)\|^2 ]  \; \forall x \!\in\! \text{dom} \, F.
\end{align} 
Combining \eqref{rel1} or \eqref{rel1spp}  with \eqref{rel2}, it follows that $B = 2  \E_{\xi}[ \|\nabla f(\overline{x}_{0},\xi)\|^2 ] = 0$, i.e. the \textit{strong} stochastic bounded gradient condition holds in \eqref{as:main1_spg}. This shows that if SFO iterates converge linearly on the class of objective functions having Lipschitz  gradient and  quadratic functional growth, then the partial functions $f(\cdot,\xi)$ must have the same minimizer,  i.e.  $\nabla f(\overline{x}_{0},\xi) = 0$ alsmost surely.~\qed
\end{proof}

\noindent In the next  theorem we show that SFO algorithms based on a hybrid  strategy consisting  of using constant stepsize $\gamma=1/L$ at the beginning and then at some well-defined iteration switching to a  variable stepsize $\gamma={\cal O}(1/t)$ has a sublinear convergent behavior of order ${\cal O}(1/t)$.

\begin{theorem} [Sublinear convergence with constant-variable stepsize]
\label{th:spg_mu_var1} Let Assumptions  \ref{assumption1}  and Assumption  \ref{assumption2}  hold, and choose variable stepsize $\gamma_t =\min \left(\frac{1}{L}, \frac{c}{(t+1)} \right)$,  for some fixed  $c > 0$. Set $d = c^2 {\cal B}^2$ and $t_0= \lfloor cL \rfloor$. Then, we get the following sublinear convergence rate for the square distance of the SFO iterates to the solution set:
\begin{align*}
& \E[\|x_{t} - \overline{x}_{t}\|^2 ] \\
& \quad \leq
\left\{\begin{array}{ll}
\frac{ 2t_0R_{t_0}^2 + 2d(1+ \log((t+1)/(t_0+1)))}{t+1}& \text{if}  \quad c\mu=2 \; \& \; t > t_0,\\
\left(  R_{t_0}^2(1+t_0)^{0.5c\mu} + \frac{2d(2 - 0.5 c \mu)(t_0+1)^{0.5c\mu}}{1-0.5c\mu}\right) \frac{1}{(t+1)^{0.5 c\mu}}
& \text{if} \quad c\mu<2 \; \& \; t > t_0,\\
\left((t_0+1)R_{t_0}^2 + \frac{d}{0.5 c\mu-1}\right) \frac{1}{t+1} &\text{if} \quad c\mu>2 \; \& \; t > t_0\\
\max \left( |1 - \frac{\mu}{2L}|^t  R_0^2 +
\frac{{\cal B}^2}{L^2- |L^2- \mu L/2|}, {\cal B}^2/L^2 \right) & \text{if} \quad  t \leq t_0.
\end{array}\right.
\end{align*}
\end{theorem}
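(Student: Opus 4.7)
The plan is to split the analysis into two phases dictated by the definition $\gamma_t = \min\left(1/L, c/(t+1)\right)$. For $t \leq t_0 = \lfloor cL \rfloor$, one has $c/(t+1) \geq 1/L$, hence $\gamma_t = 1/L$ (constant regime); for $t > t_0$, one has $c/(t+1) < 1/L$, hence $\gamma_t = c/(t+1)$ (decreasing regime). I would therefore treat the two regimes with different tools, and then stitch the two estimates together via the value $R_{t_0}^2 = \E[\|x_{t_0} - \overline{x}_{t_0}\|^2]$, which Phase 1 bounds and Phase 2 uses as initial data.

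\textbf{Phase 1 ($t \leq t_0$).} Here $\gamma_t \equiv 1/L \in (0, 2/L)$, so I would simply invoke Theorem \ref{th:spg_muL}. Plugging $\gamma = 1/L$ into the contraction factor yields $1 - \mu\gamma + \mu L\gamma^2/2 = 1 - \mu/(2L)$; substituting this into the two cases of Theorem \ref{th:spg_muL} and taking the maximum of the resulting upper bounds gives exactly the fourth branch of the statement, namely $\max\bigl(|1-\mu/(2L)|^t R_0^2 + {\cal B}^2/(L^2 - |L^2 - \mu L/2|),\; {\cal B}^2/L^2\bigr)$.

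\textbf{Phase 2 ($t > t_0$).} I would revisit the one-step recursion \eqref{recursive1_spg}, namely
\[
\E[\|x_{t+1} - \overline{x}_{t+1}\|^2] \leq \left(1 - \mu\gamma_t + \tfrac{\mu L \gamma_t^2}{2}\right)\E[\|x_t - \overline{x}_t\|^2] + {\cal B}^2 \gamma_t^2,
\]
and exploit that for $t > t_0$ one has $\gamma_t = c/(t+1) \leq 1/L$, so $\mu L \gamma_t^2/2 \leq \mu\gamma_t/2$. Substituting $\gamma_t = c/(t+1)$ and tightening the contraction factor yields
\[
\E[\|x_{t+1} - \overline{x}_{t+1}\|^2] \leq \left(1 - \tfrac{c\mu/2}{t+1}\right)\E[\|x_t - \overline{x}_t\|^2] + \tfrac{c^2 {\cal B}^2}{(t+1)^2},
\]
which is precisely the template of Lemma \ref{lem_polyak1} with the lemma's constant $c$ replaced by $c\mu/2$, with $d = c^2 {\cal B}^2$, and with initial index $t_0$. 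The three regimes $c\mu/2 = 1$, $c\mu/2 < 1$, $c\mu/2 > 1$ of that lemma produce the three corresponding branches of the theorem, where $R_{t_0}^2$ acts as the finite initial value and is controlled by the Phase 1 estimate.

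\textbf{Main obstacle.} The only delicate point is verifying that the threshold $t_0 = \lfloor cL \rfloor$ correctly demarcates the two regimes: one must confirm that for every $t > t_0$ the inequality $c/(t+1) < 1/L$ holds, so that the replacement of the factor $\mu L \gamma_t^2/2$ by $\mu\gamma_t/2$ used in deriving the Phase 2 recursion is justified; and that at $t = t_0$ the Phase 1 bound supplies a legitimate (finite) initialization $r_{t_0}$ for Lemma \ref{lem_polyak1}. Beyond that verification, the rest is essentially bookkeeping: matching the parameters $(c\mu/2, c^2{\cal B}^2)$ against Lemma \ref{lem_polyak1} and writing out the three resulting rate expressions.
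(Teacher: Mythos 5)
Your proposal is correct and follows essentially the same route as the paper: Phase 1 invokes Theorem \ref{th:spg_muL} with $\gamma=1/L$ to bound $R_{t_0}^2$, and Phase 2 uses $\gamma_t\le 1/L$ to reduce the one-step recursion to $r_{t+1}\le\bigl(1-\tfrac{c\mu}{2(t+1)}\bigr)r_t+\tfrac{c^2{\cal B}^2}{(t+1)^2}$ for $t\ge t_0$ and then applies Lemma \ref{lem_polyak1}. The threshold check you flag (that $c/(t+1)\le 1/L$ from $t_0=\lfloor cL\rfloor$ onward) is exactly the point the paper handles implicitly via $2-\gamma_t L\ge 1$.
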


\begin{proof}
Since for $t \leq t_0$ the stepsize is $\gamma_t=1/L$, it follows from Theorem \ref{th:spg_muL}  that $\E_{\xi_{[t_0-1]}} [\|x_{t_0} - \overline{x}_{t_0}\|^2]$ is
bounded for example by:
\[  \E_{\xi_{[t_0-1]}} [\|x_{t_0} - \overline{x}_{t_0}\|^2] \leq \max \left( |1 -  \frac{\mu}{2L}|^{t_0}  R_0^2 + \frac{{\cal B}^2}{L^2- |L^2- \mu L/2|}, {\cal B}^2/L^2 \right). \]
Moreover,  for our hybrid choice of the  stepsizes $\gamma_t =\min\left(
\frac{1}{L}, \frac{c}{(t+1)} \right)$ we always have $2-\gamma_t L \geq 1$
and consequently  combining \eqref{spg_basic}  with Assumption \ref{assumption2}  it follows that:
\begin{align*}
& \E_{\xi_{[t]}} [\|x_{t+1}  - \overline{x}_{t+1}\|^2] \\
& \leq \E_{\xi_{[t-1]}}[\|x_{t} - \overline{x}_{t}\|^2]  -  \gamma_t (2 -
\gamma_t L) \E_{\xi_{[t-1]}}
\left[F(x_t) - F(\overline{x}_{t})\right]  + {\cal B}^2 \gamma_{t}^2 \\
& \overset{\eqref{as:strong_spg}}{\leq} \E_{\xi_{[t-1]}}[\|x_{t} -
\overline{x}_{t}\|^2]  - \frac{\mu \gamma_t (2 - \gamma_t L)}{2}
\E_{\xi_{[t-1]}} [\|x_{t} - \overline{x}_{t}\|^2] +  {\cal B}^2
\gamma_{t}^2\\
&  \leq  \E_{\xi_{[t-1]}}[\|x_{t} -
\overline{x}_{t}\|^2]  - \frac{\mu \gamma_t }{2}
\E_{\xi_{[t-1]}} [\|x_{t} - \overline{x}_{t}\|^2] +  {\cal B}^2
\gamma_{t}^2\\ 
& =  \left(1 - \frac{c\mu}{2(t+1)} \right) \E_{\xi_{[t-1]}} [\|x_{t} -
\overline{x}_{t}\|^2]  + \frac{c^2 {\cal B}^2}{(t+1)^{2}} \quad \forall t
\geq t_0.
\end{align*}
Therefore,  we obtain the recurrence $r_{t+1} \leq  \left(1 - \frac{c\mu}{2(t+1)} \right) r_t + \frac{c^2 {\cal B}^2}{(t+1)^{2}} $ and then using Lemma \ref{lem_polyak1}  we get our statements. \qed 
\end{proof}

\noindent From previous theorem we observe that the best convergence rate ${\cal O}(1/t)$ is obtained when the constant  $c$ is proportional to the inverse of the strong convexity constant $\mu$, and thus the switching time $t_0$ has to be about two times the condition number $L/\mu$ of the optimization problem \eqref{eq_problem}, i.e.:
\[  c \approx \frac{2}{\mu} \quad \text{and} \quad t_0 \approx  2 \frac{L}{\mu}.   \]

\begin{remark}
The previous convergence results for SFO algorithms can be easily  extended  to a more general and robust variable stepsize of the form:
\[ \gamma_t =\min \left( \frac{1}{L}, \frac{c}{(t+1)^\alpha} \right), \]
for some fixed $c > 0$ and $\alpha \in (0,1)$. In this case we can  use Lemma \ref{lem_polyak2} (see also \cite{MouBac:11,PatNec:17}) for the  recurrence:
\[  r_{t+1} \leq \left( 1  - \frac{c \mu}{2(t+1)^\alpha}\right) r_t + \frac{c^2 {\cal B}^2}{(t+1)^{2\alpha}} \qquad \forall t \geq t_0,  \]
to obtain sublinear convergence rate of order ${\cal O}(1/t^\alpha)$ for the square distance of the SFO iterates to the solution set. \qed
\end{remark}

%%%%%%%%%%%%%%%%%%%%%%%%%%%%%%%%%%%%%%%%%%%%
\vspace{-0.4cm}

\subsection{Stochastic gradient descent revisited}
\vspace{-0.2cm}
\noindent   Note that our framework based on stochastic gradient
mapping and Moreau mapping allows us to generalize the existing results \cite{NedBer:00,SchRou:13}  for stochastic gradient descent (SGD), i.e. when $g$ is the indicator function of a simple convex set $C$, to the general case of stochastic first order (SFO) algorithms that are able to deal with any convex family of functions $g(\cdot,\xi)$ that admit a tractable proximal operator. In particular, when considering the stochastic  convex problem $\min_{x \in C} \Exp{ f(x,\xi)},$ we recover the basic results for the  SGD   algorithm:
\begin{equation*}
(SGD): \qquad  x_{t+1} = \Pi_{C}(x_t - \gamma_t \nabla
f(x_t,\xi_t)).
\end{equation*}
More precisely, it follows  that the conclusions of Theorems
\ref{th:spg_basic},  \ref{th:spg_mu0}  and \ref{th:spg_mu0var} are still valid
provided that Assumption \ref{assumption1} is replaced with:
\begin{equation}
\label{e:main1} B^2 + L(f(x) - f(\overline{x})) \geq
\Exp{\|\nabla f(x,\xi)\|^2} \quad \forall x \in C.
\end{equation}
Moreover,  Theorems \ref{th:spg_muL} and \ref{th_spg_nec}  hold, when Assumption \ref{assumption2} is
replaced with:
\begin{equation}
\label{e:strong} f(x) - f(\overline{x}) \geq  \frac{\mu}{2} \|x -
\overline{x}\|^2 \quad \forall x \in C.
\end{equation}
In particular,   from Theorem \ref{th:spg_muL} under the conditions  \eqref{e:main1} with $B=0$ and \eqref{e:strong}  we get
linear convergence for SGD. Note that our strong stochastic bounded gradient condition, i.e.  \eqref{e:main1} with $B=0$,  may be more general than
the one imposed in \cite{SchRou:13}   to obtain linear convergence for SGD . More precisely,  \cite{SchRou:13} considers $C = \rset^n$ and the  strongly convex function $f$ with Lipschitz gradient. Besides these assumptions \cite{SchRou:13} also
imposes a strong growth condition:
\[  L_\text{sg} \E[\| \nabla f(x,\xi) \|^2] \leq  \| \E[\nabla f(x,\xi)]
\|^2 \quad \forall x \in \rset^n, \] for some $L_\text{sg} >0$.  Clearly,
if the strong growth condition holds and $f$ has gradient Lipschitz with constant $L_f$, then \eqref{e:main1} also  holds with  $B=0$, since for the
unconstrained case,   requirement that $f$ has
gradient Lipschitz leads to:
\begin{align*}
f(x) & \geq  f(\overline{x}) + \langle \nabla f(\overline{x}), x - \overline{x} \rangle + \frac{1}{2L} \|\nabla f(x) - \nabla f(\overline{x}) \|^2 \\
& \overset{\nabla f(\overline{x})=0}{=} f(\overline{x})  +
\frac{1}{2L} \|\Exp{\nabla f(x,\xi)} \|^2  \geq f(\overline{x})  + \frac{L_\text{sg}}{2L} \Exp{ \|\nabla
f(x,\xi)\|^2}.
\end{align*}

\noindent Note that the conditions \eqref{e:main1} with $B=0$  and
\eqref{e:strong}  are satisfied  in some applications. For
example, consider the problem of finding a point in the (in)finite
intersection of a family of simple closed convex sets, i.e.  $\text{find} \quad  x \in C = \cap_{\xi \in \Omega} C_{\xi}$.  Assume  that linear regularity holds for the sets $(C_{\xi})_{\xi \in \Omega}$, i.e.
there exists  finite constant $\kappa >0$ such that:
\begin{align}
\label{linreg}  \kappa \, \| x - \Pi_{C}(x) \|^2 \leq  \Exp{\| x -
\Pi_{C_\xi}(x)\|^2} \quad \forall x \in \rset^n.
\end{align}
Typical examples of sets satisfying linear regularity are polyhedra
sets or convex sets with  nonempty interior \cite{NecRic:16}. For
example,  when solving linear systems $Ax=b$ we can take $C_\xi=\{x:
a_\xi^Tx = b_\xi \}$, where $a_\xi$ is the $\xi$th row of matrix  $A
\in \rset^{m \times n}$. In  this case,  $\kappa =m/ \lambda_{\min}^\text{nz}(A^TA)$,  provided that we consider a  uniform probability
distribution on $\Omega=\{1,\cdots,m\}$,   where
$\lambda_{\min}^\text{nz}(A^TA)$ denotes the smallest nonzero eigenvalue
of $A^TA$. Then, under linear regularity the convex feasibility
problem can be equivalently written as a stochastic optimization
problem, see also \cite{NecRic:16}:

\vspace{-0.2cm}

\[ \min_{x \in \rset^m} \Exp{f(x,\xi)} \quad \left( \eqdef
\frac{1}{2} \Exp{\| x - \Pi_{C_\xi}(x)\|^2} \right).  \] 

\vspace{-0.1cm}

\noindent Then, if $C
\not = \emptyset$, it follows immediately that $f(x) =
\Exp{f(x,\xi)}$ satisfies condition \eqref{e:main1} with $L=2$ and
$B=0$, since the following relations hold: $2 f(x) =  \Exp{ \|\nabla f(x,\xi)\|^2}$ and  $f(\overline{x}) =0$.  Moreover, it is easy to see that the linear regularity
\eqref{linreg} can be written  as    \eqref{e:strong}  with $\mu = \kappa$.  In conclusion, from the results of the previous section we have that SGD or equivalently  the  stochastic alternating projection algorithm  $x_{t+1} = x_t -
\gamma_t (x_t - \Pi_{C_{\xi_t}}(x_t))$  converges linearly. Note
that for linear systems $Ax=b$ the previous alternating projection
scheme with $\gamma_t=1$ becomes Kaczmarz algorithm.  Similar linear
convergence rates for the alternating projection has been derived
in \cite{NecRic:16} and those results were the main motivation for this paper, that is extending the linear rates for stochastic alternating projection  to more general stochastic problems and algorithms.

%%%%%%%%%%%%%%%%%%%%%%%%%%%%%%%%%%%%%%%%%%%%%%%%%%%%%%%%%%%%%%%%%%%%%%%%%%%%%%%%%%%%%%%%%%%%%%%%%%%
\vspace{-0.3cm}

\section{Restarted stochastic first order methods}
\vspace{-0.2cm}
\noindent From previous sections we observe that linear convergence can be achieved by stochastic  first order (SFO) methods (i.e. SPG and SPP) provided that $B =0$ in Assumption \ref{assumption1}  and Assumption  \ref{assumption2}. However, this condition restricts the class of functions for which we can achieve linear convergence for SFO.  In this section  we propose  a restarting variant of SFO and prove that this new method can achieve linear convergence for classes of functions for which $B \not = 0$. This restarting variant consists of running the SPG/SPP algorithms (as a routine) for multiple times (epochs) and restarting it each time after a certain
number of iterations. In each epoch $t$, the SPG/SPP scheme  runs
for an estimated number of iterations $K_t$. More explicitly,  the
restarted stochastic first order (R-SFO) scheme has the following
iteration:
\begin{algorithm}
Let $x_{0,0} \in \text{dom}\, F$. For $t \geq 1$ compute stepsize
$\gamma_t$ and number of  iterations $K_t$. Iterate SPG/SPP
algorithm for $K_{t}$ iterations with constant stepsize $\gamma_t$
and starting from $x_{K_{t-1},t-1}$. Set $x_{K_t,t}$ the average of iterates.
\end{algorithm}

\noindent In the next theorem we derive the convergence rate of this
restarted scheme.

\begin{theorem}\label{th_rspg_linear}
Assume that  there exist positive constant $B$ and $B_g$ such that:
\begin{align}
\label{rspg_b}  \Exp{\| \nabla f(x, \xi) + \nabla g(x, \xi)\|^2} \leq B^2 \;
\text{and} \; \|\nabla g(x, \xi)\|^2 \leq B_g^2 \quad \forall \; x \in
\text{dom} \ F
\end{align}
and that $F$ satisfies a $\nu$ functional growth condition on any segment $[x, \; \bar{x}]$:
\begin{align}
\label{rspg_sm} F(x) - F(\overline{x}) \geq  \mu \|x -
\overline{x}\|^\nu \qquad \forall x \in \text{dom} \ F \quad \text{with} \quad \nu \in [1, \; 2].
\end{align}
Also let $\gamma_0 > 0$, ${\cal B}^2 = B^2 + B_g^2$ and
$\{x_{K_t,t}\}_{ t \ge 0}$ be generated by R-SFO with stepsize
$\gamma_t = \frac{\epsilon_{t-1}}{2 {\cal B}^2}$ and $K_{t} =
\left\lceil \frac{4
{\cal B}^2}{\mu^{2/\nu} \epsilon_{t-1}^{2 - 2/\nu}} \right\rceil$, where
$\epsilon_t = \frac{\epsilon_{t-1}}{2}$ and $\epsilon_0 \geq  F(x_{0,0})
- F^*$. If, for a given accuracy $\epsilon> 0 $, we perform  $ T = \left\lceil
\log\left(\frac{\epsilon_0}{\epsilon}\right) \right\rceil $ epochs, then
after a total number of iterations of the SPG/SPP schemes that is
bounded~by:
\[  \left\lceil \frac{4 {\cal B}^2}{\mu^{2/\nu} \epsilon^{2 - 2/\nu}} \right\rceil  \left\lceil \log\left(\frac{\epsilon_0}{\epsilon}\right)
\right\rceil, \qquad \text{we have} \qquad \mathbb{E}[F(x_{K_T,T})]
- F^* \le \epsilon. \]
\end{theorem}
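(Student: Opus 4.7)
The overall strategy is a restart-based induction on the epoch index $t$: I aim to show that $\E[F(x_{K_t,t}) - F^*] \leq \epsilon_t$, so that after $T = \lceil \log_2(\epsilon_0/\epsilon)\rceil$ restarts the target accuracy $\epsilon_T = \epsilon_0/2^T \leq \epsilon$ is automatically reached. The total complexity is then read off from $\sum_{t=1}^T K_t$.

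First, I would verify that the uniform bound $\E[\|\nabla f(x,\xi)+\nabla g(x,\xi)\|^2] \leq B^2$ in \eqref{rspg_b}, together with $\|\nabla g(x,\xi)\| \leq B_g$, forces Assumption \ref{assumption1} with $L=0$ and the constant ${\cal B}^2 = B^2 + B_g^2$ compatible with both SPG and SPP. Hence Theorem \ref{th:spg_mu0} applies inside any single epoch. Conditioning on $x_{K_{t-1},t-1}$, the averaged iterate produced by $K_t$ steps of SPG/SPP with constant stepsize $\gamma_t$ satisfies
$$\E\bigl[F(x_{K_t,t}) - F^* \,\big|\, x_{K_{t-1},t-1}\bigr] \leq \frac{\|x_{K_{t-1},t-1} - \overline{x}\|^2}{2 K_t \gamma_t} + \frac{\gamma_t {\cal B}^2}{2}.$$

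Next, I would replace the squared distance by a function-value quantity using the $\nu$-growth condition \eqref{rspg_sm}: raising $\mu\|x - \overline{x}\|^\nu \leq F(x) - F^*$ to the power $2/\nu \geq 1$ yields the pointwise deterministic inequality $\|x - \overline{x}\|^2 \leq \bigl((F(x) - F^*)/\mu\bigr)^{2/\nu}$. Substituting this into the conditional descent estimate and plugging in the prescribed $\gamma_t = \epsilon_{t-1}/(2{\cal B}^2)$ gives $\gamma_t {\cal B}^2/2 = \epsilon_{t-1}/4$ for the noise term; the prescribed $K_t \geq 4{\cal B}^2/(\mu^{2/\nu}\epsilon_{t-1}^{2-2/\nu})$ yields $\gamma_t K_t \geq 2\epsilon_{t-1}^{2/\nu-1}/\mu^{2/\nu}$, so the distance term is bounded by $\epsilon_{t-1}/4$ whenever the inductive hypothesis $F(x_{K_{t-1},t-1}) - F^* \leq \epsilon_{t-1}$ is in force. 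Summing the two contributions gives $\epsilon_{t-1}/2 = \epsilon_t$, which closes the induction. The complexity bound then follows from $K_t = \lceil 4{\cal B}^2/(\mu^{2/\nu}\epsilon_{t-1}^{2-2/\nu})\rceil$: since $\epsilon_{t-1} = \epsilon_0/2^{t-1}$ and the exponent $2-2/\nu \in [0,1]$ for $\nu \in [1,2]$, the sequence $1/\epsilon_{t-1}^{2-2/\nu}$ is a geometric progression dominated by its final term $1/\epsilon^{2-2/\nu}$, so summing across the $T$ epochs bounds the total iterations by $T \cdot \lceil 4{\cal B}^2/(\mu^{2/\nu}\epsilon^{2-2/\nu})\rceil$, matching the claim.

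The hard part will be the transition from the pointwise inequality $\|x - \overline{x}\|^2 \leq ((F(x) - F^*)/\mu)^{2/\nu}$ to a usable bound in expectation when $\nu < 2$, since then $2/\nu > 1$ and a naive application of Jensen's inequality to $\E[(F(x_{K_{t-1},t-1}) - F^*)^{2/\nu}]$ points in the wrong direction. I would resolve this by carrying a stronger inductive hypothesis, namely a direct bound $\E[\|x_{K_t,t} - \overline{x}\|^2] \leq (\epsilon_t/\mu)^{2/\nu}$, initialised from the deterministic starting point $x_{0,0}$; one then propagates \emph{both} the distance bound and the function-value bound together using the scaling of $\gamma_t$ and $K_t$, which is tailored precisely to absorb the $2/\nu$-power nonlinearity. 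The case $\nu = 2$ is the transparent one where $2/\nu = 1$ and all passages to expectation are linear; the general $\nu \in [1,2]$ case requires this coupled inductive scheme rather than a single-variable induction on the function gap alone.
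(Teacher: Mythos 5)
Your main line of argument is exactly the paper's: verify that \eqref{rspg_b} gives Assumption \ref{assumption1} with $L=0$ and ${\cal B}^2=B^2+B_g^2$, apply Theorem \ref{th:spg_mu0} inside each epoch, convert the starting squared distance into a function gap via $\|x-\overline{x}\|^2\le ((F(x)-F^*)/\mu)^{2/\nu}$, check that the prescribed $\gamma_t$ and $K_t$ make each of the two terms equal to $\epsilon_{t-1}/4$, and sum the $K_t$ over $T=\lceil\log(\epsilon_0/\epsilon)\rceil$ epochs. All of that matches the paper's proof, including the arithmetic.

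Where you go beyond the paper is in flagging the Jensen-direction problem for $\nu<2$: since $2/\nu>1$, the pointwise bound $\|x-\overline{x}\|^2\le((F(x)-F^*)/\mu)^{2/\nu}$ does not yield $\E[\|x-\overline{x}\|^2]\le(\E[F(x)-F^*]/\mu)^{2/\nu}$. This is a genuine issue, and the paper's own proof silently steps over it --- its inductive hypothesis \eqref{th_rspg_linear_rel1} compares a random quantity $\mu\|x_{K_{t-1},t-1}-\overline{x}_{K_{t-1},t-1}\|^\nu$ with the deterministic number $\E[F(x_{K_{t-1},t-1})]-F^*$, which only makes sense for $\nu=2$ or if the iterates are treated as deterministic. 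However, your proposed repair (carrying the stronger hypothesis $\E[\|x_{K_t,t}-\overline{x}\|^2]\le(\epsilon_t/\mu)^{2/\nu}$ as a second inductive invariant) does not close with the theorem's constants as you state it. The only available mechanism to control the expected squared distance of the epoch's output is the telescoped recursion \eqref{sfo_basic} with $L=0$, which gives $\E[\mathrm{dist}(x_j,X^*)^2]\le\E[\mathrm{dist}(x_{\mathrm{start}},X^*)^2]+j\gamma_t^2{\cal B}^2$; at $j=K_t$ the additive term equals roughly $(\epsilon_{t-1}/\mu)^{2/\nu}$, so the end-of-epoch distance is bounded by about $2(\epsilon_{t-1}/\mu)^{2/\nu}$, whereas the invariant you need for the next epoch is $(\epsilon_t/\mu)^{2/\nu}=2^{-2/\nu}(\epsilon_{t-1}/\mu)^{2/\nu}$, which is strictly smaller. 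Trying instead to exploit the negative term $-2\gamma_t\E[F(x_j)-F^*]$ through the growth condition runs into the same Jensen obstruction, since $\E[d^\nu]\le(\E[d^2])^{\nu/2}$ points the wrong way for a contraction. So for $\nu<2$ a genuine fix requires more than bookkeeping: either enlarge $K_t$ and shrink $\gamma_t$ by suitable constant factors and prove the coupled invariant carefully, or restate the growth condition in an expectation form $\E[F(x)]-F^*\ge\mu(\E[\|x-\overline{x}\|^2])^{\nu/2}$, or pass to a high-probability argument. You deserve credit for locating the soft spot, but the sketch you give of how to patch it is not yet a proof.
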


\begin{proof}
For any $t \ge 1$, consider the $t$th epoch of the  R-SFO method.
Let the sequence $\{\epsilon_t\}_{ t\ge 0}$, with $\epsilon_0 \geq
F(x_{0,0}) - F^*$, satisfy:
\begin{align}
\label{th_rspg_linear_rel1} \mu \norm{x_{K_{t-1},t-1} -
\overline{x}_{K_{t-1},t-1}}^\nu \le \mathbb{E}[F(x_{K_{t-1},t-1})] - F^*
\le \epsilon_{t-1}.
\end{align}
Also consider the constant  stepsize $ \gamma_t  =
\frac{\epsilon_{t-1}}{2{\cal B}^2}$ and perform $K_{t} \ge \frac{4
{\cal B}^2}{\mu^{2/\nu} \epsilon_{t-1}^{2 - 2/\nu}}$  number of inner iterations of SPG/SPP. Since  \eqref{rspg_b} holds, then we can use Theorem \ref{th:spg_mu0}  to conclude that:
\begin{align*}
\mathbb{E}[F(x_{K_{t},t})] - F^* & \le \frac{\norm{x_{K_{t-1},t-1} -
\overline{x}_{K_{t-1},t-1}}^2}{2 \gamma_t K_{t}} + \frac{\gamma_t
{\cal B}^2}{2} \\
& \overset{\eqref{th_rspg_linear_rel1}}{\le}
\frac{\epsilon_{t-1}^{2/\nu}}{2 \mu^{2/\nu} \gamma_t K_{t}} + \frac{\gamma_t
{\cal B}^2}{2} \leq \frac{\epsilon_{t-1}}{4} +
\frac{\epsilon_{t-1}}{4} = \epsilon_t,
\end{align*}
which confirms the induction. Since we require $T = \lceil \log(\epsilon_0 /\epsilon) \rceil$ epochs, then we can bound the total number of SPG/SPP  iterations as:
$$\sum_{t=1}^T K_t \leq \frac{4 {\cal B}^2}{\mu^{2/\nu} \epsilon^{2 - 2/\nu}}  \log\left(\frac{\epsilon_0}{\epsilon}\right),$$ which leads to our statement.  \qed
\end{proof}

\noindent Notice that relation \eqref{rspg_sm} for $\nu=1$ becomes the so-called  sharp minima condition \cite{Pol:87}. The previous theorem shows that  we can still achieve linear convergence for SFO schemes, even when the  condition $B=0$ does not hold, provided that the function has a sharp minimum. We only need to implement a restarting variant of these schemes which allows us to obtain linear convergence for the class of  functions with bounded subgradients (i.e. $B \not = 0$) and having a sharp minima type property (i.e. satisfying  the $\nu=1$ functional growth condition \eqref{rspg_sm}). Moreover, this theorem improves the previous  convergence rates of SFO for constant stepsize when $B \not = 0$ and $\nu<2$. More precisely, from Theorem \ref{th:spg_mu0} we have convergence rate ${\cal O}(1/\epsilon^2)$ for SPP/SPG on the class of functions with $B \not = 0$ and $\nu<2$, while the corresponding restarted variants under a $\nu \in (1, \; 2)$ functional growth condition improves to ${\cal O}(1/\epsilon^{2-2/\nu})$. There are several examples of functions satisfying the sufficient conditions of the previous theorem.   It  particular, sharp minima condition has been considered frequently in the literature for proving linear convergence of some first order methods, see e.g. \cite{BurFer:93,Pol:87,YanLin:16}. There are
multiple examples of practical optimization models which satisfy
\eqref{rspg_sm} with $\nu=1$, in particular objective functions whose epigraph is a polyhedral~set:  $F(x) = \max_{\xi \in \Omega}(a_\xi^T x + b_\xi)$.  These
type of problems arise frequently   e.g. in  machine learning
applications and for discrete stochastic models, that is  $f(x) =
1/N\sum_{i=1}^N f(x,i)$ and $g(x) = \|x\|_1$ or the indicator
function of some polyhedral set  $C$. We list below the most
relevant classes of functions from machine learning applications
satisfying \eqref{rspg_sm}: hinge loss $f(x,\xi) = \max(0, 1- y_\xi z^T_\xi x)$,  absolute loss $f(x,\xi) = |z^T_\xi x - y_\xi|$ or $\delta$-insensitive loss  $f(x,\xi) = \max(|z^T_\xi x - y_\xi| - \delta, 0)$. All these  instances satisfy  also \eqref{rspg_b} if  $g(x,\xi)$ is $\norm{x}_1$ or the indicator function of some polyhedral set  $C_\xi$.

\vspace{0.2cm}

\noindent \textbf{Conclusions}: In this paper  we have derived   convergence rates for stochastic first order (SFO)  methods with constant or variable stepsize under general assumptions on the optimization problem  covering a large class of objective functions. In particular,  our analysis covered the   class of  non-smooth Lipschitz  functions and composition of a (potentially) non-smooth function  and a smooth function, with or without strong convexity. Moreover, given some desired accuracy $\epsilon$, in general, our SFO algorithms are $\epsilon$-free and parameters-free, i.e. the stepsize $\gamma_t$ depends on parameters that are easy to compute. E.g., $\gamma_t$ depends on   $L$ but not on $\mu$, see Theorems  2.6 and 2.7  and also Remark 2.2.  

%%%%%%%%%%%%%%%%%%%%%%%%%%%%%%%%%%%%%%%%%%%%%%%%%%%
%%%%%%%%%%%%%%%%%%%%%%%%%%%%%%%%%%%%%%%%%%%%%%%%%%%
\vspace{-0.5cm}

\end{document}